\newtheorem{defn}{Definition}[section]
\newtheorem{thm}[defn]{Theorem}
\newtheorem{lem}[defn]{Lemma}
\newtheorem*{sub-lem}{Sub-lemma}
\newtheorem{cor}[defn]{Corollary}
\newtheorem{prop}[defn]{Proposition}
\newtheorem{rem}[defn]{Remark}
\newcommand{\R}{\mathbb{R}}
\newcommand{\C}{\mathbb{C}}
\newcommand{\Z}{\mathbb{Z}}
\newcommand{\T}{\mathbb{T}}
\newcommand{\D}{\mathbb{D}}
\newcommand{\esssup}{\text{ess sup }}
\newcommand{\n}{\|}
\newcommand{\tr}{\text{tr}}
\begin{document}
\title[Rearrangement on complex interpolation for families]{On the effect of rearrangement on complex interpolation for families of Banach spaces}
\author{Yanqi QIU}
\date{}
\maketitle
\begin{abstract}
We give a new proof to show that the complex interpolation for families of Banach spaces is not stable under rearrangement of the given family on the boundary, although, by a result due to Coifman, Cwikel, Rochberg, Sagher and Weiss, it is stable when the latter family takes only 2 values. The non-stability for families taking 3 values was first obtained by Cwikel and Janson. Our method links this  problem to the theory of  matrix-valued Toeplitz operator and we are able  to characterize all the transformations on $\T$ that are invariant for complex interpolation at 0, they are precisely the origin-preserving inner functions.
\end{abstract}
2010 Mathematics Subject Classification: 46B70, 46M35\\
Key words: complex interpolation method for families, rearrangement, matrix valued outer functions, Toeplitz operator, duality

\section*{Introduction}This paper is a remark on the theory of complex interpolation for  families of Banach spaces, developed by Coifman, Cwikel, Rochberg, Sagher and Weiss in \cite{CCRSW}. To avoid technical difficulties, we will concentrate on finite dimensional spaces. 

Let $\D = \{z\in \C: | z | < 1\}$ be the unit disc with boundary $\T = \partial \D$. The normalised Lebesgue measure on $\T$ is denoted by $m$. By an interpolation family, we mean a measurable family of complex $N$-dimensional normed spaces $\{E_\gamma: \gamma \in \T\}$, i.e., $E_ \gamma$ is $\C^N$ equipped with norm $\n \cdot\n_ \gamma$ and for each $x \in \C^N$, the function $ \gamma \mapsto \n x\n_ \gamma $ defined on $\T$ is measurable. We should also assume that $\int \log^{+} \n x\n_ \gamma  dm( \gamma ) < \infty$ for any $x \in \C^N$. By definition, the interpolated space at 0 is $$E[0]: = H^{\infty}(\T; \{E_\gamma \})/z H^{\infty}(\T; \{E_\gamma \}).$$ That is, for all  $x \in \C^N$, $$ \n x \n_{E[0]} = \inf\Big \{ \underset{\gamma \in \T}{\esssup} \n f(\gamma)\n_{E_\gamma}  \Big|  f : \T \rightarrow \C^N \text{ analytic}, f(0) = x\Big\}.$$ More generally, for any $z \in \D$, the interpolated space at $z$ for the family $\{ E_\gamma: \gamma \in \T\}$ is denoted by $E[z]$ or $\{ E_\gamma: \gamma \in \T\}[z]$ whose norm is defined as follows. For any $x \in \C^N$,  $$ \n x \n_{E[z]}  := \inf\Big \{ \underset{\gamma \in \T}{\esssup} \n f(\gamma)\n_{E_ \gamma }  \Big| f: \T\rightarrow \C^N \text{ analytic}, f(z) = x\Big\}.$$ It is known (cf. \cite[Prop. 2.4]{CCRSW}) that in the above definition, instead of using $\underset{\gamma \in \T}{\esssup} \n f(\gamma)\n_{E_\gamma}$, we can use $ \Big(\int \n f(\gamma)\n_{E_\gamma}^p \, P_z(d\gamma) \Big)^{1/p}$ for $0 < p < \infty$ or $ \exp\Big(\int \log \n f(\gamma) \n_{E_\gamma}\, P_z(d\gamma)\Big)$  without changing the norm on $E[z]$. Here $P_z(d\gamma)$ is the harmonic measure on $\T$ associated to  $z$.

The goal of this paper is to investigate when the norm of the space $E[0]$ is invariant under a (measure preserving) rearrangement of the family $\{E_\gamma: \gamma \in \T\}$. A trivial example of such a rearrangement is a rotation on $\T$. But, as we will see, there are non trivial instances of this phenomenon. In particular, we recall the following well-known result. 
\begin{thm}\label{known}\cite[Cor. 5.1]{CCRSW}
If $X_\gamma  = Z_0$ for all $\gamma \in \Gamma_0$ and $X_\gamma = Z_1$ for all $\gamma \in \Gamma_1$, where $\Gamma_0$ and $\Gamma_1$ are disjoint measurable sets whose union is $\T$, then $X[0] = (Z_0, Z_1)_\theta$, where $\theta = m(\Gamma_1)$ and $(Z_0, Z_1)_\theta$ is the classical complex interpolation space for the pair $(Z_0, Z_1)$.\end{thm}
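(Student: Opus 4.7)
The plan is to reduce the family interpolation to the classical complex interpolation on the strip $S = \{z \in \C : 0 < \Re z < 1\}$ via an explicit analytic map $\phi : \D \to \overline{S}$. Let $u$ be the Poisson extension to $\D$ of $\chi_{\Gamma_1}$ and let $v$ be its harmonic conjugate normalized by $v(0) = 0$; set $\phi := u + iv$. Then $\phi$ is analytic on $\D$, takes values in $\overline{S}$, satisfies $\phi(0) = m(\Gamma_1) = \theta$, and has a.e.\ boundary values $\phi(\gamma) \in i\R$ on $\Gamma_0$ and $\phi(\gamma) \in 1 + i\R$ on $\Gamma_1$. An equivalent viewpoint, which is the point of contact with the larger theme of the paper, is that $\psi := \sigma \circ \phi$ is an origin-preserving inner function on $\D$, where $\sigma: \overline{S} \to \overline{\D}$ is a conformal equivalence sending $\theta$ to $0$.

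An auxiliary identity I will use is $\phi_* m = \omega_\theta$, the harmonic measure of $S$ at $\theta$, since both probability measures on $\partial S$ reproduce $h(\theta)$ for every bounded harmonic $h$ on $S$. For the upper bound $\|x\|_{X[0]} \leq \|x\|_{(Z_0, Z_1)_\theta}$, take any admissible $f : \overline{S} \to \C^N$ for the strip (analytic bounded on $\overline{S}$, $f(\theta) = x$, boundary norms at most $M$) and set $g := f \circ \phi$. Then $g \in H^\infty(\D, \C^N)$, $g(0) = x$, and the boundary behaviour of $\phi$ gives $\|g(\gamma)\|_{X_\gamma} \leq M$ a.e.; infimizing over $f$ completes this direction.

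For the reverse inequality, I route the argument through $\psi$. The conformal map $\sigma$ identifies strip interpolation at $\theta$ with an ``arc'' instance of disc interpolation: namely, for the family $Y_\eta = Z_j$ on the complementary arcs $A_j := \sigma(\{\Re z = j\})$, one has $(Z_0, Z_1)_\theta = Y[0]$ immediately. So the task reduces to $\|x\|_{X[0]} = \|x\|_{Y[0]}$. By Frostman's theorem $\psi_* m = m$ (since $\psi$ is an origin-preserving inner function), and $\Gamma_j = \psi^{-1}(A_j)$ a.e. by construction; composition $h \mapsto h \circ \psi$ therefore takes $Y[0]$-competitors to $X[0]$-competitors with the same essential supremum, yielding $\|x\|_{X[0]} \leq \|x\|_{Y[0]}$ once more. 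For the converse, given an $X[0]$-competitor $g$, I propose to build a $Y[0]$-competitor $h$ by fiber-averaging $g$ along $\psi$: roughly, $h(\eta)$ is a conditional expectation of $g(\gamma)$ over the fiber $\psi^{-1}(\eta)$. This preserves the value at $0$ (because $\psi(0) = 0$) and, by convexity of the norms $\|\cdot\|_{Z_j}$ and the identification $\Gamma_j = \psi^{-1}(A_j)$, does not increase the boundary bound $M$.

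The main obstacle is proving that this fiber-averaged $h$ lies in $H^\infty(\D, \C^N)$ rather than merely in $L^\infty(\T, \C^N)$; equivalently, one must show that the conditional expectation onto the $\sigma$-algebra generated by $\psi$ preserves $H^\infty$. This is a known consequence of the isometric embedding $H^p(\D) \hookrightarrow H^p(\D)$ via composition with an origin-preserving inner function and the resulting orthogonal-projection structure in $H^2$, but its careful use is the technical heart of the proof. A cleaner alternative is a Hahn--Banach / duality argument: express $\|x\|_{(Z_0, Z_1)_\theta}$ as a supremum over $\C^N$-valued analytic measures on $\partial S$, then pull these measures back to $\T$ via $\phi$, using $\phi_* m = \omega_\theta$ to identify the two suprema and conclude.
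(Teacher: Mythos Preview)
Your construction of $\phi$ (Poisson extension of $\chi_{\Gamma_1}$ plus harmonic conjugate) and the resulting origin-preserving inner function $\psi=\sigma\circ\phi$ is exactly the paper's Lemma~\ref{inner}, and your easy direction $\|x\|_{X[0]}\le\|x\|_{(Z_0,Z_1)_\theta}$ via composition with $\phi$ (equivalently with $\psi$) is identical to the paper's. So the setup and one half of the proof coincide.

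The difference is in the reverse inequality. The paper does \emph{not} fiber-average. Instead it observes that the easy direction, applied verbatim to the dual family $\{X_\gamma^*\}$ (which is again a two-valued family on $\Gamma_0\cup\Gamma_1$, with values $Z_0^*,Z_1^*$), gives $\|x\|_{X^*[0]}\le\|x\|_{(Z_0^*,Z_1^*)_\theta}$; then the duality theorem $X[0]^*=\{X_\gamma^*\}[0]$ and $(Z_0,Z_1)_\theta^*=(Z_0^*,Z_1^*)_\theta$ (both from \cite{CCRSW}) force the two contractive identity maps to be isometries. This is a two-line argument once the easy direction is in hand.

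Your fiber-averaging route is correct but heavier. The point you flag --- that the conditional expectation onto $\sigma(\psi)$ preserves $H^\infty$ --- is true: writing $C_\psi f=f\circ\psi$, one has $E[\,\cdot\,|\sigma(\psi)]=C_\psi C_\psi^{*}$ on $L^2(\T)$, and $C_\psi^{*}$ maps $H^2$ to $H^2$ because $\langle C_\psi^{*}g,\bar z^{\,n}\rangle=\langle g,\overline{\psi^{\,n}}\rangle=0$ for $g\in H^2$, $n\ge1$ (here $\psi(0)=0$ is used). Boundedness then follows from $\|h\circ\psi\|_\infty=\|h\|_\infty$ and the $L^\infty$-contractivity of conditional expectation; the norm estimate on each arc comes from Jensen's inequality for $\|\cdot\|_{Z_j}$ together with $\Gamma_j\in\sigma(\psi)$. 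So your argument closes, but the paper's duality trick avoids all of this. Your ``cleaner alternative'' via Hahn--Banach is gesturing in the right direction, but the precise and shortest version is simply: run the easy direction on the dual family and quote the duality theorem.
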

The key fact behind this theorem is the existence for any measurable partition $\Gamma_0 \cup \Gamma_1$ of the unit circle of an origin-preserving inner function taking $\Gamma_0$ to an arc of length $2\pi m(\Gamma_0)$ and $\Gamma_1$ to the complementary arc of length.  For details, see the appendix. More generally, complex interpolation at 0 is stable under the rearrangements given by any inner function vanishing at 0.

\begin{prop}\label{invariant-inner}
Let $\varphi: \D \rightarrow \C$ be an inner function vanishing at 0. Its boundary value is denoted again by $ \varphi: \T \rightarrow \T$. Then for any interpolation family $\{E_{\gamma} : \gamma \in \T\}$, the canonical identity: $$Id: \{ E_\gamma: \gamma \in \T\}[0] \rightarrow \{ E_{\varphi(\gamma)}: \gamma \in \T\}[0]$$ is isometric.
\end{prop}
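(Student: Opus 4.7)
My plan is to establish both inequalities $\|x\|_{\{E_\gamma\}[0]} \le \|x\|_{\{E_{\varphi(\gamma)}\}[0]}$ and its converse. A common ingredient is that an inner function $\varphi$ with $\varphi(0)=0$ induces a measure-preserving transformation of $(\T,m)$: testing against trigonometric monomials gives $\int_\T \varphi(\gamma)^k\, dm(\gamma) = \varphi(0)^k = 0$ for $k\ge 1$ (and the case $k\le -1$ follows by conjugation), so $\varphi_* m = m$.

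For the easy direction $\|x\|_{\{E_{\varphi(\gamma)}\}[0]} \le \|x\|_{\{E_\gamma\}[0]}$, I would take any analytic $f : \D \to \C^N$ with $f(0) = x$ and set $g := f \circ \varphi$. Then $g$ is analytic with $g(0) = f(\varphi(0)) = x$, and the measure-preserving property gives $\underset{\gamma\in\T}{\esssup} \|g(\gamma)\|_{E_{\varphi(\gamma)}} = \underset{\eta\in\T}{\esssup} \|f(\eta)\|_{E_\eta}$, so the bound follows after taking the infimum over $f$.

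For the reverse direction, given analytic $g$ with $g(0)=x$ and $\underset{\gamma}{\esssup} \|g(\gamma)\|_{E_{\varphi(\gamma)}} \le M$, I would use the $L^2$-adjoint of the composition operator $C_\varphi : h \mapsto h \circ \varphi$, setting $f := C_\varphi^* g$ applied coordinate-wise to the $\C^N$-valued $g$. Three facts need verification. First, $f \in H^2$: for every $m\ge 1$,
$$\langle C_\varphi^* g, \gamma^{-m}\rangle = \int_\T g(\gamma)\varphi(\gamma)^m\, dm(\gamma) = 0,$$
since $g\varphi^m \in H^1$ vanishes at the origin (this is where $\varphi(0)=0$ is essential). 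Second, $f(0) = \langle g, C_\varphi 1\rangle = \langle g, 1\rangle = g(0) = x$. Third, $C_\varphi f = C_\varphi C_\varphi^* g = \E[g\mid\mathcal{A}]$ with $\mathcal{A} := \varphi^{-1}(\mathcal{B}(\T))$, because $C_\varphi$ is an $L^2$-isometry and $C_\varphi C_\varphi^*$ is the orthogonal projection onto $L^2(\T,\mathcal{A},m)$.

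The final norm estimate exploits that $\|\cdot\|_{E_{\varphi(\gamma)}}$ is constant on each fiber $\varphi^{-1}(\eta)$, hence $\mathcal{A}$-measurable. Vector-valued Jensen's inequality for conditional expectation then applies fiber-by-fiber:
$$\|f(\varphi(\gamma))\|_{E_{\varphi(\gamma)}} = \|\E[g\mid\mathcal{A}](\gamma)\|_{E_{\varphi(\gamma)}} \le \E\bigl[\|g(\cdot)\|_{E_{\varphi(\cdot)}} \bigm| \mathcal{A}\bigr](\gamma) \le M \quad \text{a.e.},$$
and a final invocation of $\varphi_* m = m$ converts this into $\underset{\eta\in\T}{\esssup} \|f(\eta)\|_{E_\eta} \le M$. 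The main obstacle I expect is the interplay between the adjoint $C_\varphi^*$ and the vector-valued Hardy-space theory: producing an analytic $f$ whose composition with $\varphi$ is the desired conditional expectation relies on $\varphi$ being inner and origin-preserving, not merely measure-preserving; the Jensen step itself reduces to scalar Jensen on each fiber because the norm is constant there.
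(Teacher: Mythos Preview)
Your argument is correct. The easy direction (composition with $\varphi$) is exactly what the paper does. For the reverse inequality, however, the paper takes a different and shorter route: it simply applies the easy direction to the dual family $\{E_\gamma^*\}$ and invokes the duality theorem $\{E_\gamma\}[0]^* = \{E_\gamma^*\}[0]$ from \cite[Th.~2.12]{CCRSW}, so that a contraction in one direction on both a family and its dual forces isometry.

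Your approach is more constructive: by taking $f = C_\varphi^* g$ you explicitly exhibit an analytic competitor in the original family, and the identification $C_\varphi C_\varphi^* = \E[\,\cdot\,|\sigma(\varphi)]$ together with conditional Jensen (legitimate here because the norm $\|\cdot\|_{E_{\varphi(\gamma)}}$ is $\sigma(\varphi)$-measurable and the target is finite-dimensional) gives the norm bound directly. This avoids the duality theorem entirely, at the cost of a little operator-theoretic machinery. One minor technical point worth flagging: applying $C_\varphi^*$ presupposes $g \in L^2(\T;\C^N)$, which does not follow from $\esssup_\gamma \|g(\gamma)\|_{E_{\varphi(\gamma)}} < \infty$ under the paper's bare hypothesis $\int \log^+\|x\|_\gamma\, dm < \infty$; you would need a mild uniform lower bound on the norms, or an approximation argument. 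The paper's duality route sidesteps this, but the issue is harmless in all the concrete families considered later.
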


\begin{proof}
The proof is routine, for details, see the last step in the proof of Theorem \ref{known} in the appendix. 
\end{proof}

Theorem \ref{known} shows in particular that in the 2-valued case, the complex interpolation is stable under rearrangement (the reader is referred to Lemma \ref{inner} for the detail). We show that in the general case, this is not the case. We learnt from the referee that  this result was previously obtained by Cwikel and Janson in \cite{Cwikel-Janson} with a different method, the statement is at the bottom of page 214, the proof is from page 278 to page 283. 

Our method is simpler and it also yields a characterization of all the transformations on $\T$ that are invariant for complex interpolation at 0, they are precisely the inner functions vanishing at 0. In other words, the converse of Proposition \ref{invariant-inner} holds.

\bigskip

Here is how the paper is organised. 

 In \S\ref{approximation-formula}, we recall a result from Helson and Lowdenslager's papers \cite{Helson-Low, Helson-Low-II}  on the matrix-valued outer function $F_W: \D \rightarrow M_N$ associated to a given matrix weight $W: \T \rightarrow M_N$. This result allows us to give an approximation formula for $|F_W(0)|^2$ when $W$ is a small perturbation of the constant weight $I$, where $I$ is the identity matrix in $M_N$. 
 
 In \S\ref{continuous-family}, we study the interpolation families consisting of distorted Hilbert spaces (i.e., $\C^N$ equipped with norms $\n x \n_\gamma = \n W(\gamma)^{1/2} x\n_{\ell_2^N}$ for a.e. $\gamma \in \T$). We produce an explicit example of such a family for which complex interpolation at 0 is not stable under rearrangement.  
 
Our main results are given in \S\ref{three-hilbert}, where we study some interpolation families consisting of 3 distorted Hilbert spaces. It is shown that in this restricted case, the complex interpolation at 0 is already non-stable under rearrangement.  One advantage of our method is that we are able to characterize all the transformations on $\T$ that are invariant for complex interpolation at 0, they are precisely the inner functions $\Theta: \T \rightarrow \T$ such that $\Theta(0) = \widehat{\Theta}(0) = 0$.
 
\S\ref{related-comments} is mainly devoted to the stability of complex interpolation under rearrangement for families of \textit{compatible} Banach lattices. We also exhibit a rather surprising non-stability example of interpolation family taking values in  $\{X, \overline{X}, X^*, \overline{X}^*\}$.
 
Finally, in the Appendix, we reformulate the argument of \cite{CCRSW} to prove Theorem \ref{known}, the proof somewhat explains why the 3-valued case is different from the 2-valued case. 


\section{An approximation formula}\label{approximation-formula}
In this section, we first recall some results from \cite[\S 5]{Helson-Low} and \cite[\S10, \S11, \S12]{Helson-Low-II} in the forms that will be convenient for us, and then deduce from them a useful formula.

Let $W: \T \rightarrow M_N$ be a measurable positive semi-definite $N \times N$-matrix valued function such that $\tr(W)$ is integrable. Such a function should be considered as a matrix weight. Without mentioning, all matrix weights in this paper satisfy: There exist $c, C>0$ such that  \begin{eqnarray}\label{assumption-on-weight} c I \le W(\gamma) \le C I \quad \text{ for } a.e.\, \gamma \in \T;\end{eqnarray} where $I$ is the identity matrix in $M_N$.  For such a matrix weight, let $L^2_W= L^2(\T, W; S_2^N)$ be the set of functions $f: \T \rightarrow M_N$ for which $$\n f\n_{L_W^2}^2 = \int \tr \Big(f(\gamma)^* W(\gamma) f(\gamma)\Big) dm(\gamma) < \infty.$$ Clearly, $L_W^2$ is a Hilbert space. 

We will consider two subspaces $H^2(W) \subset L_W^2$ and $H^2_0(W) \subset L_W^2$ defined as follows: $$H^2(W)  = \{ f \in L_W^2 | \hat{f}(n)  = 0, \forall n < 0\}, $$ $$H_0^2(W)  = \{ f \in L_W^2 | \hat{f}(n)  = 0, \forall n \le 0\}.$$ 

Given the assumption \eqref{assumption-on-weight} on $W$, the identity map $Id: L_2(\T; S_2^N) \rightarrow L_W^2$ is an isomorphism, more precisely,  \begin{eqnarray} c^{1/2} \n f\n_{L^2(\T;S_2^N)}  \le  \n f\n_{L^2_W} \le C^{1/2} \n f\n_{L^2(\T;S_2^N)}.\end{eqnarray} In particular, $H^2_0(\T; S_2^N)$ and  $H^2_0(W)$ are set theoretically identical but equipped with equivalent norms. 


In the sequel, any element $F \in H^2(\T; S_2^N)$ will be identified with its holomorphic extension on $\D$, in particular, $F(0) = \widehat{F}(0)$, the 0-th Fourier coefficient.

We recall the following theorem (a restricted form) of Helson and Lowdenslager from \cite{Helson-Low} and \cite{Helson}. We denote by $S_2^N$ the spaces of $N \times N$ complex matrices equipped with the Hilbert-Schmidt norm.

\begin{thm}[Helson-Lowdenslager]
Assume $W$ a matrix weight satisfying the assumption \eqref{assumption-on-weight}. Then there exists $F \in H^2(\T; S_2^N)$ such that \begin{itemize} \item $F(\gamma)^*F(\gamma) = W(\gamma)$ \text{ for } a.e. $\gamma \in \T$. \item $F$ is a right outer function, that is, $F \cdot H^2(\T; S_2^N) $ is dense in $H^2(\T; S_2^N)$. \end{itemize} Let $\Phi$ be the orthogonal projection of the constant function $I$ to the subspace $H^2(W) \ominus H_0^2(W) \subset L_W^2$, i.e., $\Phi = P_{H^2(W) \ominus H_0^2(W)}(I),$ then \begin{eqnarray} \label{outer}\Phi(\gamma)^* W(\gamma) \Phi(\gamma)  = |F(0)|^2 \, \text{ for } a.e.\, \gamma \in \T.\end{eqnarray} Moreover, $\Phi$ and $F$ and both invertible.
\end{thm}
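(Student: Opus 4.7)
The statement has two parts, and I would prove them in order. For the \emph{existence} of $F$, the hypothesis $cI \le W \le CI$ ensures that $\log\det W \in L^\infty(\T) \subset L^1(\T)$, so the classical matrix-valued factorization theorem of Wiener--Masani (the matrix Szeg\H{o} theorem) produces a right outer $F \in H^2(\T; S_2^N)$ with $F(\gamma)^* F(\gamma) = W(\gamma)$ a.e.; moreover the matrix Szeg\H{o} identity $|\det F(0)|^2 = \exp \int \log \det W\, dm$ shows that $F(0)$ is invertible.

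For the identity $\Phi^* W \Phi = |F(0)|^2$, the key device is the left-multiplication operator
\[
M_F : L^2_W \longrightarrow L^2(\T; S_2^N), \qquad f \mapsto Ff.
\]
Using $F^*F = W$, one checks at once that $M_F$ is an isometry, since $\n Ff\n_{L^2(\T;S_2^N)}^2 = \int \tr(f^* F^* F f)\, dm = \n f\n_{L^2_W}^2$. Inspection of Fourier coefficients, combined with $\widehat F(n)=0$ for $n<0$, shows that $M_F$ sends $H^2(W)$ into $H^2(\T; S_2^N)$ and $H^2_0(W)$ into $H^2_0(\T; S_2^N)$.

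Next, decompose $I = \Phi + I_0$ inside $H^2(W) = (H^2(W)\ominus H^2_0(W)) \oplus H^2_0(W)$. Since $\widehat I(0) = I$ and $\widehat{I_0}(0) = 0$, one has $\widehat\Phi(0) = I$ for free. Because $M_F$ is an isometry, the orthogonality $\Phi \perp H^2_0(W)$ in $L^2_W$ is transported into $F\Phi \perp F\cdot H^2_0(W)$ in $L^2(\T; S_2^N)$. The right outer property says $F\cdot H^2(\T; S_2^N)$ is dense in $H^2(\T; S_2^N)$, and multiplying by $z$ shows $F\cdot H^2_0(\T; S_2^N)$ is dense in $H^2_0(\T; S_2^N)$; set-theoretically this coincides with $F \cdot H^2_0(W)$. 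Hence $F\Phi \in H^2(\T; S_2^N)$ is orthogonal to all of $H^2_0(\T; S_2^N)$, which forces $F\Phi$ to be the constant $\widehat{F\Phi}(0) = \widehat F(0) \widehat\Phi(0) = F(0)$. Squaring then yields $\Phi^* W \Phi = (F\Phi)^* (F\Phi) = |F(0)|^2$ a.e., and invertibility of $\Phi$ follows at once from that of $|F(0)|^2$ and of $W$.

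The main obstacle is conceptual rather than computational: $M_F$ is typically \emph{not} surjective onto $L^2(\T; S_2^N)$, so one cannot directly identify the subspaces on the two sides of the isometry. The right outer property of $F$ is precisely the ingredient needed to upgrade the orthogonality $F\Phi \perp F\cdot H^2_0(W)$ to orthogonality against all of $H^2_0(\T; S_2^N)$, and this upgrade is what pins $F\Phi$ down to a constant matrix.
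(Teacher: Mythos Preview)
The paper does not supply its own proof of this theorem: it is explicitly recalled as a classical result of Helson and Lowdenslager (see the sentence preceding the statement, ``We recall the following theorem (a restricted form) of Helson and Lowdenslager from \cite{Helson-Low} and \cite{Helson}''), and no argument is given in the text. So there is nothing in the paper to compare against.

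That said, your proposal is correct and is essentially the standard argument from the Helson--Lowdenslager theory. The isometry $M_F:L^2_W\to L^2(\T;S_2^N)$, the transport of orthogonality, and the use of the right-outer density to conclude $F\Phi\equiv F(0)$ are exactly the classical steps. One small remark: under the present hypothesis $cI\le W\le CI$ the map $M_F$ is in fact bijective (since $F^{-1}\in L^\infty$), so your ``main obstacle'' paragraph somewhat overstates the difficulty in this restricted setting; but the argument you give works regardless. Also, the statement asserts that $F$ is invertible (not just $F(0)$), which the paper uses later via $F(z)^{-1}$; you might add a line noting that $\det F$ is scalar outer with $|\det F|^2=\det W\ge c^N$, so $F^{-1}\in H^\infty$.
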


If $F$ and $G$ are two (right) outer functions such that $$F(\gamma)^*F(\gamma) = G(\gamma)^*G(\gamma) = W(\gamma) \, \text{ for } a.e.\, \gamma \in \T,$$ then there is a constant unitary matrix $U \in \mathscr{U}(N)$ such that $F (z) = U G(z)$ for all $z \in \D$. In particular, $| F(0) |^2 = | G(0)|^2$ is uniquely determined by $W$, as shown by the equation \eqref{outer}. Within all possible such outer functions, there is a unique one such that $F(0)$ is positive, we will denote it by $F_W$. 

Let $\Psi = P_{H^2_0(W)}(I)$, where the orthogonal projection $P_{H_0^2(W)}$ is defined on the space $L_W^2$. Clearly, we have \begin{eqnarray}\label{elementaryrel}\Phi = I - \Psi.\end{eqnarray} We have already known that set theoretically, $H^2_0(W) = H^2_0(\T; S_2^N)$, and they are equipped with equivalent norms, thus we have a Fourier series for $\Psi \in H_0^2(W) = H_0^2(\T;S_2^N)$:  $$\Psi = \sum_{n \ge 1} \widehat{\Psi}(n) \gamma^n;$$ where the convergence is in $H^2_0(\T; S_2^N)$ and hence in $H^2_0(W)$.  

By definition, $\Psi$ is characterized as follows. For any $A \in M_N$ and any $n\ge 1$, we have $\langle \Psi, \gamma^n A\rangle_{L_W^2} = \langle I, \gamma^n A\rangle_{L_W^2},$ i.e., $$ \int \tr (\gamma^{-n} A^* W \Psi) dm(\gamma) = \int \tr (\gamma^{-n} A^* W) dm(\gamma).$$ Or equivalently,  \begin{eqnarray}\label{fourierexpan}  \int \gamma^{-n} W \Psi dm(\gamma) = \int \gamma^{-n}  W dm(\gamma), \text{ for } n \ge 1.  \end{eqnarray} We denote by $P_{+}$ the orthogonal projection of $L^2(\T)$ onto the subspace $H^2_0(\T)$. The generalized projection $P_{+} \otimes I_X$ on $L_p(\T; X)$ for $1 < p < \infty$ will still be denoted by $P_{+}$ . Note that $P_{+}$ is slightly different to the usual Riesz projection, the latter is defined as the orthogonal projection onto $H^2(\T)$. Similarly, we denote by $P_{-}$ the orthogonal projection onto $\overline{H_0^2(\T)}$ and also its generalisation on $L_p(\T; X)$ when it is bounded. With this notation, the equation system \eqref{fourierexpan} is equivalent to \begin{eqnarray}\label{riesz} P_{+} (W\Psi) = P_{+}(W).\end{eqnarray}

\bigskip

{\flushleft \bf Key observation:} If $W$ is a perturbation of identity, that is, if there exists a measurable function $\Delta: \T \rightarrow M_N$ such that $$ \Delta(\gamma)^* = \Delta(\gamma) \, \text{ for } a.e.  \gamma \in \T \text{ and } \n \Delta\n_{L_\infty(\T; M_N)} < 1$$ and $$W= I + \Delta;$$ then the equation \eqref{riesz} has the form \begin{eqnarray}\label{perturbation} \Psi  + P_{+}(\Delta \Psi) = P_{+}(\Delta).\end{eqnarray} The above equation can be solved using a Taylor series.

\bigskip

To make the last sentence in the preceding observation rigorous, we introduce the following Toeplitz type operator: $$T_\Delta : H_0^2(\T; S_2^N) \xrightarrow{L_\Delta} L^2(\T; S_2^N) \xrightarrow{P_{+}} H_0^2(\T; S_2^N);$$ where $L_{\Delta}: H_0^2(\T; S_2^N) \rightarrow L^2(\T; S_2^N)$ is the left multiplication by $\Delta$ on the subspace $H_0^2(\T; S_2^N)$. More precisely, $$(L_\Delta f)(\gamma) = \Delta(\gamma) f(\gamma) \text{ for any } f \in L^2(\T; S_2^N).$$ Clearly, we have $$ \n T_\Delta\n \le \n \Delta\n_{L_\infty(\T; M_N)}< 1.$$ The term $P_{+}(\Delta)$ in equation \eqref{perturbation} should be treated as an element in $H_0^2(\T; S_2^N)$, then the equation \eqref{perturbation} has the form \begin{eqnarray}\label{toeplitz} (Id + T_{\Delta}) (\Psi) = P_{+}(\Delta).\end{eqnarray} Since $\n T_\Delta\n < 1$, the operator $Id + T_\Delta$ is invertible. Thus equation \eqref{toeplitz} has a unique solution $\Psi \in H_0^2(\T; S_2^N) = H^2_0(W)$ given by the formula: \begin{eqnarray}\label{taylorseries} \Psi & = & (Id + T_\Delta)^{-1} (P_{+}(\Delta))  =  \sum_{n = 0 }^\infty(-1)^n T_\Delta^n(P_{+}(\Delta)) ;\end{eqnarray} where $T^0_\Delta (P_{+}(\Delta)) = P_{+}(\Delta)$, and the convergence is understood in the space $H^2_0(\T; S_2^N)$.  Combining equations \eqref{outer}, \eqref{elementaryrel} and \eqref{taylorseries}, we deduce the following formula: \begin{align*}  |F_{I + \Delta}(0) |^2 = &\Big[I - \sum_{n = 0}^\infty(-1)^n T_{\Delta}^n(P_{+}(\Delta))\Big]^*(I+ \Delta) \times \\ & \times \Big[I - \sum_{n= 0}^\infty(-1)^n T_{\Delta}^n(P_{+}(\Delta))\Big] .\end{align*}

We summarize the above discussion in the following:
\begin{prop}\label{degree2}
Let $\Delta: \T \rightarrow M_N$ be a measurable bounded selfadjoint function such that $\n \Delta \n_{L_\infty(\T; M_N)} < 1$. Let $\varepsilon \in [0, 1]$, then we have \begin{align*} |F_{I + \varepsilon \Delta}(0) |^2  = &\Big[I - \sum_{n = 0}^\infty(-1)^n \varepsilon^{n+1} T_{\Delta}^n(P_{+}(\Delta))\Big]^*(I+\varepsilon \Delta) \times \\ & \times \Big[I - \sum_{n= 0}^\infty(-1)^n \varepsilon^{n+1}T_{\Delta}^n(P_{+}(\Delta))\Big]. \end{align*} In particular, we have \begin{align}\label{approximation} | F_{I  + \varepsilon \Delta}(0) |^2  = I + \varepsilon \widehat{\Delta}(0) - \varepsilon^2 \sum_{n \ge 1} |\widehat{\Delta}(n) |^2 + \mathcal{O}(\varepsilon^3), \text{ as } \varepsilon \to 0^{+}. \end{align}
\end{prop}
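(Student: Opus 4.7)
The plan is to read off the first identity directly from the formula derived immediately before the proposition, and then to extract the asymptotic expansion by taking $0$-th Fourier coefficients.

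First, since $\|\varepsilon \Delta\|_{L_\infty(\T; M_N)} \leq \|\Delta\|_{L_\infty(\T; M_N)} < 1$ for every $\varepsilon \in [0,1]$, the formula preceding the proposition applies with $\Delta$ replaced by $\varepsilon \Delta$. Because the left-multiplication operator satisfies $L_{\varepsilon\Delta} = \varepsilon L_\Delta$ and $P_{+}(\varepsilon\Delta) = \varepsilon P_{+}(\Delta)$, one has $T_{\varepsilon\Delta}^n(P_{+}(\varepsilon\Delta)) = \varepsilon^{n+1} T_\Delta^n(P_{+}(\Delta))$, and the first displayed equation of the proposition follows at once.

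For the asymptotic formula \eqref{approximation}, the key observation is that $|F_{I+\varepsilon\Delta}(0)|^2$ is a \emph{constant} matrix, so it equals the integral against $dm$ of the right-hand side. Writing $A := P_{+}(\Delta)$, the bound $\|T_\Delta\| \leq \|\Delta\|_{L_\infty(\T; M_N)} < 1$ makes the Neumann series converge uniformly in $\varepsilon \in [0,1]$, and gives
$$\sum_{n\geq 0}(-1)^n \varepsilon^{n+1} T_\Delta^n(A) \;=\; \varepsilon A - \varepsilon^2 T_\Delta(A) + \mathcal{O}(\varepsilon^3)$$
in $H_0^2(\T; S_2^N)$. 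I would then expand the triple product modulo $\mathcal{O}(\varepsilon^3)$ and integrate term by term. Every contribution involving a single factor lying in $H_0^2(\T;S_2^N)$ or its adjoint drops out because its $0$-th Fourier coefficient vanishes; in particular the second-order correction $T_\Delta(A)$ and the linear factors $A, A^*$ disappear from the integrated expression. What survives is $\widehat{\Delta}(0)$ at order $\varepsilon$ together with the three integrals
$$\int A^*A\,dm \;-\; \int A^*\Delta\,dm \;-\; \int \Delta A\,dm$$
at order $\varepsilon^2$.

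The final bookkeeping step is Parseval. By orthonormality of $\{\gamma^n\}$, the first integral equals $\sum_{n\geq 1}|\widehat{\Delta}(n)|^2$, and the same holds for the second integral. For the third one, the $0$-th Fourier coefficient of $\Delta A$ is $\sum_{n\geq 1}\widehat{\Delta}(-n)\widehat{\Delta}(n)$, which the selfadjointness identity $\widehat{\Delta}(-n) = \widehat{\Delta}(n)^*$ converts into $\sum_{n\geq 1}|\widehat{\Delta}(n)|^2$ as well. Thus the $\varepsilon^2$ coefficient collapses to $1 - 1 - 1 = -1$ times $\sum_{n\geq 1}|\widehat{\Delta}(n)|^2$, yielding \eqref{approximation}. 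The only mildly delicate point is the uniformity of the $\mathcal{O}(\varepsilon^3)$ remainder as $\varepsilon \to 0^+$, but this is immediate from the contraction estimate $\|T_\Delta\| < 1$, so there is no real obstacle.
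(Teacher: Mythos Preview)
Your argument is correct and follows essentially the same route as the paper: both use the constancy of $|F_{I+\varepsilon\Delta}(0)|^2$ to replace the pointwise expansion by its integral over $\T$, then reduce the $\varepsilon^2$ term to $\int A^*A\,dm - \int A^*\Delta\,dm - \int \Delta A\,dm$ and evaluate via Parseval together with the selfadjointness relation $\widehat{\Delta}(-n)=\widehat{\Delta}(n)^*$. The only cosmetic difference is that the paper first names the coefficients $R_1,R_2$ and observes directly that $R_1=\Delta-P_+(\Delta)-P_+(\Delta)^*=\widehat{\Delta}(0)$ before integrating $R_2$, whereas you integrate from the outset; the substance is identical.
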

\begin{proof}
It suffices to prove the approximation identity \eqref{approximation}. We have \begin{align}\label{expansion} \begin{split} | F_{I + \varepsilon \Delta} (0) |^2  = & \Big[ I - \varepsilon P_{+} (\Delta) + \varepsilon^2 T_{\Delta}(P_{+}(\Delta)) + \mathcal{O}(\varepsilon^3) \Big]^* (I + \varepsilon \Delta) \times  \\  & \times  \Big[ I - \varepsilon P_{+} (\Delta) + \varepsilon^2 T_{\Delta}(P_{+}(\Delta)) + \mathcal{O}(\varepsilon^3) \Big]  \\  = &  I + \varepsilon  R_1 + \varepsilon^2 R_2 + \mathcal{O}(\varepsilon^3), \text{ as } \varepsilon \to 0^{+}; \end{split} \end{align} where $$R_1 = \Delta -  P_{+}(\Delta) - P_{+}(\Delta)^*,$$ $$ R_2 = P_{+}(\Delta)^*P_{+}(\Delta)   -  \Delta P_{+}(\Delta) -  P_{+}(\Delta)^* \Delta +   T_\Delta(P_{+}(\Delta)) +  T_{\Delta}(P_{+}(\Delta))^*.$$ For $R_1$, we note that since $\Delta$ is selfadjoint, $ P_{-}(\Delta)= P_{+}(\Delta)^*$ and hence \begin{eqnarray}\label{selfadjoint} \Delta = P_{+}(\Delta) + P_{+}(\Delta)^* + \widehat{\Delta}(0).\end{eqnarray} Thus $$ R_1 = \widehat{\Delta}(0). $$ For $R_2$, we note that since the left hand side of equation \eqref{expansion} is independent of $\gamma \in \T$, the right hand side should also be independent of $\gamma$, hence $R_2$ must be independent of $\gamma$, it follows that \begin{eqnarray*}R_2 &= & \int R_2 (\gamma) dm(\gamma)  \\ & = &  \int \Big( P_{+}(\Delta)^*P_{+}(\Delta)   -  \Delta P_{+}(\Delta) -  P_{+}(\Delta)^* \Delta\Big) dm(\gamma) \\ & = & - \sum_{n \ge 1}\widehat{ \Delta}(n)^*\widehat{\Delta}(n) = - \sum_{n \ge 1} | \widehat{\Delta}(n)|^2.\end{eqnarray*} 
\end{proof}


\section{Interpolation Families in  the Continuous Case}\label{continuous-family}
To any invertible matrix $A\in GL_N(\C)$ is associated a Hilbertian norm $\n \cdot\n_A$ on $\C^N$, which is defined as follows: $$\n x \n_A = \n A x\n_{\ell_2^N}, \text{ for  any } x \in \C^N;$$  where $\ell_2^N$ denotes the space $\C^N$ with the usual Euclidean norm.  Let us denote  $\ell_A^2 :  = (\C^N, \n \cdot \n_A).$ We have the following elementary properties: 
 
 \begin{itemize}
 \item Let $A, B \in GL_N(\C)$, then they define the same norm on $\C^N$ if and only if $| A | = | B|$.  Thus, if $U\in \mathscr{U}(N)$ is a $N \times N$ unitary matrix, then $\n \cdot\n_{UA} = \n \cdot\n_A$. 
 
 \item We define a pairing  $(x, y)= \sum_{n = 1}^N x_n y_n$ for any $x, y \in \C^N$, then under this pairing, we have the  canonical isometries: $$(\ell_A^2)^* = \ell_{A^{-T}}^2;$$  where  $A^{-T}$ is the inverse of the tranpose matrix $A^T$.
 
 \item We have the following canonical isometries: $$\overline{\ell_A^2} = \ell_{\overline{A}}^2 \,\text{ and }\, \overline{\ell_A^2}^* =  \ell_{(A^*)^{-1}}^2.$$
 
 \end{itemize}
Here we recall that, for a complex Banach space $X$, its complex conjugate $\overline{X}$ is defined to be the space consists of the same element of $X$, but with scalar multiplication $$\lambda \cdot v= \bar{\lambda} v, \text{ for } \lambda \in \C, v \in X. $$
\bigskip

Consider an $N\times N$-matrix weight $W$. To such a weight is associated  an interpolation family $$\{ \ell^2_{w(\gamma)}: \gamma \in \T\}, \text{ where }w(\gamma) = \sqrt{W(\gamma)}.$$ 

The following elementary proposition will be used frequently:
\begin{prop}\label{startpoint}
For interpolation family $\{E_\gamma: \gamma \in \T\}$ with $E_\gamma = \ell_{w(\gamma)}^2$, we have $E[0] = \ell^2_{F(0)}$, that is, $$\n x \n_{E[0]} = \n F(0) x \n_{\ell_2^N}, \, \text{ for all } x \in \C^N;$$ where $F(z)$ is any right outer function associated to the weight $W$.
\end{prop}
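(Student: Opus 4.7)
My plan is to use the $L^2$-reformulation of the interpolation norm granted by \cite[Prop.~2.4]{CCRSW}, which in our setting (with $P_0 = m$) means
\[
\|x\|_{E[0]}^2 = \inf\Bigl\{ \int_{\T} \|f(\gamma)\|_{E_\gamma}^2 \, dm(\gamma) : f \in H^2(\T;\C^N),\ f(0) = x \Bigr\}.
\]
The key rewriting is $\|f(\gamma)\|_{E_\gamma}^2 = f(\gamma)^* W(\gamma) f(\gamma) = \|F(\gamma) f(\gamma)\|_{\ell_2^N}^2$, which uses only the factorization $F^*F = W$ a.e.\ and not yet the outer property. The standing assumption $W \le C I$ gives $\|F(\gamma)\|_{\mathrm{op}} \le C^{1/2}$, so $F \in H^\infty(\T; M_N)$ and $Ff \in H^2(\T; \C^N)$ for any admissible $f$. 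The problem therefore reduces to the matrix-extremal identity
\[
\|x\|_{E[0]}^2 \;=\; \inf\bigl\{\,\|Ff\|_{L^2(\T;\,\ell_2^N)}^2 : f \in H^2(\T;\C^N),\ f(0) = x \,\bigr\}.
\]

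For the lower bound, the zeroth Fourier coefficient of $Ff$ equals $F(0)f(0) = F(0)x$, so Parseval's identity in $L^2(\T;\ell_2^N)$ immediately gives $\|Ff\|_{L^2}^2 \ge \|F(0)x\|_{\ell_2^N}^2$; taking the infimum over $f$ yields $\|x\|_{E[0]} \ge \|F(0)x\|_{\ell_2^N}$ with no appeal to the outer property.

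For the matching upper bound, I would approximate the constant function $F(0)x \in H^2(\T;\C^N)$ by elements of the form $Ff_n$ with $f_n(0) = x$. This reduces to showing that $F \cdot H^2(\T;\C^N)$ is dense in $H^2(\T;\C^N)$, which I would deduce from the right outer property $\overline{F\cdot H^2(\T; S_2^N)} = H^2(\T; S_2^N)$ by embedding a vector-valued $H^2$-function as the first column of a matrix-valued one, approximating in the matrix norm, and reading off the first column. Given $h_n \in H^2(\T;\C^N)$ with $Fh_n \to F(0)x$ in $L^2$, invertibility of $F(0)$ (asserted in the Helson--Lowdenslager theorem) forces $h_n(0) \to x$; the correction $f_n := h_n + (x - h_n(0))$ then enforces $f_n(0) = x$ exactly, while
\[
\|Ff_n - F(0)x\|_{L^2} \le \|Fh_n - F(0)x\|_{L^2} + C^{1/2}\,\|x - h_n(0)\|_{\ell_2^N} \longrightarrow 0,
\]
by $\|F\|_{L^\infty(M_N)} \le C^{1/2}$. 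This furnishes $\|x\|_{E[0]} \le \|F(0)x\|_{\ell_2^N}$.

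I expect the only mildly delicate point to be the passage from the matrix-valued density statement (the right outer property as originally formulated) to its vector-valued analogue; once that is in place, everything else is Parseval, the uniform boundedness of $F$, and invertibility of $F(0)$, all supplied by the quoted Helson--Lowdenslager theorem or by the freedom to replace the $L^\infty$-infimum with its $L^2$-counterpart at $z=0$.
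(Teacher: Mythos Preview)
Your argument is correct, but it follows a genuinely different route from the paper's. The paper obtains the upper bound $\|x\|_{E[0]} \le \|F(0)x\|_{\ell_2^N}$ by writing down the explicit test function $f_x(z) = F(z)^{-1}F(0)x$, which is admissible because the standing assumption $cI \le W$ forces $F^{-1} \in H^\infty(\T;M_N)$; this $f_x$ makes $\|f_x(\gamma)\|_{E_\gamma}$ \emph{constant} and equal to $\|F(0)x\|_{\ell_2^N}$, so no approximation is needed. For the lower bound the paper invokes duality: $E[0]^* = \{E_\gamma^*\}[0]$, the dual family has weight $W^{-T}$ with associated outer function $F^{-T}$, and the same one-line upper bound applied to the dual family yields $\|x\|_{E[0]^*} \le \|F(0)^{-T}x\|_{\ell_2^N}$, which forces equality.

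Your Parseval argument for the lower bound is more self-contained than the paper's duality step (it avoids quoting the duality theorem from \cite{CCRSW}), and your column-embedding passage from the $S_2^N$-valued outer property to the $\C^N$-valued density is a clean reduction. On the other hand, your upper bound via density and the correction $f_n = h_n + (x - h_n(0))$ is more laborious than necessary: the exact minimizer $F^{-1}F(0)x$ is already at hand, and using it would collapse that half of your proof to a single computation (and would also work directly in the $L^\infty$-definition of $E[0]$, without appealing to the $L^2$-reformulation).
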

\begin{proof}
By the definition of right outer function associated to the weight $W$, \begin{eqnarray}\label{defrightouter} F(\gamma)^* F(\gamma) = W(\gamma)\,\text{ for } a.e.\, \gamma \in \T.\end{eqnarray} For any $x \in \C^N$,  define an analytic function $f_x: \D \rightarrow \C^N$ by $$f_x(z) = F(z)^{-1} F(0) x,$$ then $f_x(0) = x$ and for a.e. $\gamma \in \T$, \begin{eqnarray*} \n f_x(\gamma)\n_{w(\gamma)}^2 & = & \langle W(\gamma) F(\gamma)^{-1} F(0) x, F(\gamma)^{-1} F(0) x\rangle \\ & = &\langle F(\gamma)^* F(\gamma) F(\gamma)^{-1} F(0) x, F(\gamma)^{-1} F(0) x\rangle \\ & = & \n F(0) x  \n_{\ell_2^N}^2.\end{eqnarray*} This shows that $\n f_x\n_{H^{\infty}(\T; \{E_\gamma\})} \le \n F(0) x\n_{\ell_2^N}$, whence $$\n x\n_{E[0]} \le \n F(0) x \n_{\ell_2^N} = \n x \n_{\ell_{F(0)}^2}.$$

 The converse inequality will be given by duality, it suffices to show that $$\n x \n_{E[0]^*}  \le \n x \n_{\ell_{F(0)^{-T}}^2} = \n x \n_{(\ell_{F(0)}^2)^*}.$$ Consider the dual interpolation family $\{E_\gamma^*: \gamma \in \T\} = \{\ell_{w(\gamma)^{-T}}^2: \gamma \in \T\},$ which is naturally given by the weight $W(\gamma)^{-T} = (w(\gamma)^{-T})^*w(\gamma)^{-T}$. By \cite[Th. 2.12]{CCRSW}, we have a canonical isometry $$\{E_\gamma^*: \gamma \in \T\} [0] =  E[0]^*.$$ The identity \eqref{defrightouter} implies $$(F(\gamma) ^{-T})^* F(\gamma) ^{-T} = W(\gamma)^{-T}\,\text{ for } a.e.\, \gamma \in \T. $$ Thus $F(z)^{-T}$ is the right outer function associated to the weight $W(\gamma)^{-T}$. Then the same argument as above yields that $$\n x \n_{E[0]^*}  \le \n x \n_{\ell_{F(0)^{-T}}^2} = \n x \n_{(\ell_{F(0)}^2)^*}.$$
\end{proof}

\begin{rem}
More generally, assume that $X$ is a (finite dimensional) normed space such that $M_N \subset End(X)$ and $\n u \cdot x \n_X = \n x\n_X$ for any $u \in \mathscr{U}(N)$. For instance $X= S_p^N \, (1\le p \le \infty)$ and $M_N$ acts on $S_p^N$ by the usual left multiplications of matrices. Consider the interpolation family $E_\gamma = (X, \n \cdot\n_{X; \, A(\gamma)})$ with $\n x \n_{X;\,  A(\gamma)} = \n A(\gamma) \cdot x \n_X$ for any $\gamma \in \T$, then $E[0] = (X, \n \cdot \n_{B(0)})$ with $\n x \n_{B(0)} = \n B(0) \cdot x \n_X$, where $B(z)$ is any right outer function associated to the matrix weight $A(\gamma)^*A(\gamma)$.  
\end{rem}

The following result is probably known to the experts of prediction theory, since we do not find it in the literature, we include its proof. 
\begin{prop}\label{basicprop}
The function  $\{W(\gamma): \gamma \in \T\} \mapsto F_W(0)$ or equivalently $\{W(\gamma): \gamma \in \T\} \mapsto | F(0)|^2$ is not stable under rearrangement. More precisely, there exists a family $\{W(\gamma): \gamma \in \T\}$ and a measure preserving mapping $S : \T\rightarrow \T$, such that $$F_W(0) \ne F_{W\circ S }(0).$$
\end{prop}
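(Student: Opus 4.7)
The plan is to use the approximation formula from Proposition \ref{degree2} as the main analytic tool. Concretely, I pick a bounded selfadjoint matrix-valued function $\Delta : \T \to M_N$ and a measure preserving map $S:\T\to\T$, and I compare $|F_{I+\varepsilon\Delta}(0)|^2$ with $|F_{I+\varepsilon(\Delta\circ S)}(0)|^2$ as $\varepsilon \to 0^+$. If these two matrices already disagree at order $\varepsilon^2$, then the proposition follows by taking $\varepsilon$ sufficiently small so that the $\mathcal{O}(\varepsilon^3)$ errors cannot swallow the discrepancy.

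The constant term $I$ in \eqref{approximation} is trivially preserved, and the first-order term $\varepsilon\widehat{\Delta}(0) = \varepsilon\int\Delta\,dm$ is preserved as well since $S$ is measure-preserving. The whole question therefore reduces to the behaviour, under rearrangement, of the second-order matrix
\[
M(\Delta) := \sum_{n\ge 1}\widehat{\Delta}(n)^*\widehat{\Delta}(n).
\]
The essential observation is that, although the companion matrix $M'(\Delta) := \sum_{n\ge 1}\widehat{\Delta}(n)\widehat{\Delta}(n)^*$ added to $M(\Delta)$ gives the rearrangement-invariant quantity $\int \Delta^2\,dm - \widehat{\Delta}(0)^2$, the individual summands $M(\Delta)$ and $M'(\Delta)$ coincide only in the scalar/commutative situation. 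In particular, the trace of $M(\Delta)$ equals $\frac12\|\Delta-\widehat{\Delta}(0)\|_{L^2(\T;S_2^N)}^2$, which is itself invariant, so the difference between $M(\Delta)$ and $M(\Delta\circ S)$ must be sought in their off-diagonal or non-scalar part.

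This leads naturally to an explicit construction: take $N=2$ and let $\Delta$ be a step function on $\T$ taking three genuinely non-commuting selfadjoint values $A_1,A_2,A_3$ on three arcs $\Gamma_1,\Gamma_2,\Gamma_3$. The Fourier coefficients are then $\widehat{\Delta}(n) = \sum_j c_j^{(n)} A_j$ where $c_j^{(n)} = \int_{\Gamma_j}\gamma^{-n}\,dm(\gamma)$, and $M(\Delta)$ is a bilinear combination $\sum_{j,k}\alpha_{jk}A_jA_k$ whose coefficients $\alpha_{jk}$ encode the cyclic positions of the three arcs. A concrete rearrangement $S$ — for instance the map which swaps two of the arcs while fixing the third — will change these coefficients $\alpha_{jk}$, and because $A_jA_k\ne A_kA_j$ in general, the resulting matrix $M(\Delta\circ S)$ will genuinely differ from $M(\Delta)$.

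The main obstacle, as I see it, is the explicit verification that, for some choice of three arcs and three non-commuting selfadjoint $2\times 2$ matrices, one actually has $M(\Delta)\ne M(\Delta\circ S)$ — a calculation that must exploit non-commutativity and the failure of rearrangement in the three-valued case (in accordance with Theorem \ref{known}, a two-valued $\Delta$ never suffices). Once such an example is in hand, choosing $\varepsilon>0$ small enough so that the $\varepsilon^2$ discrepancy dominates the cubic error in Proposition \ref{degree2} yields $|F_{I+\varepsilon\Delta}(0)|^2 \ne |F_{I+\varepsilon(\Delta\circ S)}(0)|^2$, and hence $F_W(0) \ne F_{W\circ S}(0)$ for $W = I+\varepsilon\Delta$, proving Proposition \ref{basicprop}.
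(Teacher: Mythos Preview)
Your strategy is sound, but it is both incomplete and unnecessarily roundabout for this particular proposition. You correctly isolate the second-order matrix $M(\Delta)=\sum_{n\ge 1}\widehat{\Delta}(n)^*\widehat{\Delta}(n)$ as the rearrangement-sensitive term in \eqref{approximation}, and your observation that $M(\Delta)+M'(\Delta)=\int\Delta^2\,dm-\widehat{\Delta}(0)^2$ is rearrangement-invariant (so the discrepancy must be genuinely noncommutative) is exactly right. However, you then steer toward a three-arc step function and explicitly leave the decisive computation as ``the main obstacle'', so the argument is not finished.

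The paper's proof of Proposition~\ref{basicprop} bypasses the approximation formula entirely: it writes down, by hand, two explicit $M_2$-valued outer functions $F_1,F_2$ on $\D$ (triangular, with a single off-diagonal entry proportional to $z$), verifies $F_1^*F_1=W_1$ and $F_2^*F_2=W_2$ with $W_2=W_1\circ S$ for $S(\gamma)=\bar\gamma$, and simply reads off $F_1(0)\ne F_2(0)$. This direct construction also yields the exact norm $(1+r^2)^{1/2}$ in Corollary~\ref{continuous}, which a perturbative argument cannot give.

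Your route via Proposition~\ref{degree2} is precisely the machinery the paper uses \emph{later}, in \S\ref{three-hilbert}, to handle the sharper three-valued problem (Theorem~\ref{abstract-proof}, Corollary~\ref{maincor}). For Proposition~\ref{basicprop} itself there is no requirement that $W$ be finitely valued, so restricting to step functions is self-imposed difficulty. Within your own framework the continuous choice
\[
\Delta(\gamma)=\begin{bmatrix}0&\gamma\\ \bar\gamma&0\end{bmatrix},\qquad S(\gamma)=\bar\gamma,
\]
gives $\widehat{\Delta}(1)=\left[\begin{smallmatrix}0&1\\0&0\end{smallmatrix}\right]$ and $\widehat{\Delta\circ S}(1)=\left[\begin{smallmatrix}0&0\\1&0\end{smallmatrix}\right]$ (all other positive coefficients zero), hence $M(\Delta)=\left[\begin{smallmatrix}0&0\\0&1\end{smallmatrix}\right]\ne\left[\begin{smallmatrix}1&0\\0&0\end{smallmatrix}\right]=M(\Delta\circ S)$, and your argument closes in one line. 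This $\Delta$ is in fact the linearization at $r=0$ of the paper's explicit weight $W^{(r)}$.
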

Before we proceed to the proof of the proposition, let us mention that if the weight $W(\gamma)$ takes only 2 distinct values, i.e., if $W(\gamma) = A_0$ for $\gamma \in \Gamma_0$  and $W(\gamma) = A_1$ for $\gamma \in \Gamma_1$ with $\T = \Gamma_0 \cup \Gamma_1$ a measurable partition, then a detailed computation shows that we have $$F_W(0)^2 = A_0^{1/2} (A_0^{-1/2} A_1 A_0^{-1/2})^{m(\Gamma_1)} A_0^{1/2} = A_1^{1/2} (A_1^{-1/2} A_0 A_1^{-1/2})^{m(\Gamma_0)} A_1^{1/2} .$$ In particular,  $F_W(0) = F_{W \circ M}(0)$ for any measure preserving mapping $M: \T\rightarrow \T$. Of course, this can be viewed as a special case of Theorem \ref{known}. The fact that we can calculate $F_W(0)$ efficiently in the above situation is due to the fundamental fact that two quadratic forms can always be simultaneously diagonalized. 
\begin{proof}
Fix $r > 0$, define two $M_2$-valued bounded analytic functions $F_1, F_2: \D \rightarrow M_2$ by  $$F_1(z) = \left[\begin{array}{cc} (1+r^2)^{1/4} & r(1+r^2)^{-1/4} z \\ 0 & (1+r^2)^{-1/4} \end{array}\right],$$ $$F_2(z) = \left[\begin{array}{cc} (1+r^2)^{-1/4} & 0 \\ r(1+r^2)^{-1/4} z & (1+r^2)^{1/4} \end{array}\right].$$ Note that they are both outer since $z \rightarrow F_1(z)^{-1}$ and $z \rightarrow F_2(z)^{-1}$ are bounded on $\D$. By a direct computation, $$F_1(e^{i \theta} )^* F_1(e^{i \theta}) = W_1 (e^{i \theta}) = \left[\begin{array}{cc} (1+ r ^2 )^{1/2} & r e^{i \theta} \\ r e^{-i \theta} & (1+ r^2)^{1/2} \end{array} \right],$$  $$F_2(e^{i \theta} )^* F_2(e^{i \theta}) = W_2 (e^{i \theta}) = \left[\begin{array}{cc} (1+ r ^2 )^{1/2} & r e^{-i \theta} \\ r e^{i \theta} & (1+ r^2)^{1/2} \end{array} \right].$$ If we define $S: \T \rightarrow \T$ by $S(\gamma) = \overline{\gamma}$, then $S$ is measure preserving and $W_2 = W_1 \circ S$.  By noting that $F_1(0)$ and $F_2(0)$ are positive, we have $F_1= F_{W_1}$ and $F_2 = F_{W_2} = F_{W_1 \circ S}$. However, $F_{W_1 \circ S}(0) = F_2(0) \ne F_1(0) =  F_{W_1}(0)$.
\end{proof}

We denote $$W^{(r)}(e^{i\theta}): =  \left[\begin{array}{cc} (1+ r ^2 )^{1/2} & r e^{i \theta} \\ r  e^{-i \theta}& (1+ r^2)^{1/2} \end{array} \right],$$ and let $w^{(r)}(\gamma) = \sqrt{W^{(r)}(\gamma)}$. The notation $S: \T \rightarrow \T$ will be reserved for the complex conjugation mapping.

An immediate consequence of Propositions \ref{startpoint} and \ref{basicprop} is the following: 
\begin{cor}\label{continuous}
The interpolation family $\{ \widetilde{E}^{(r)}_\gamma = \ell_{(w^{(r)} \circ S)(\gamma)}^2 : \gamma \in \T\}$ is a rearrangement of the family $\{ E^{(r)}_\gamma = \ell^2_{w^{(r)}(\gamma)}: \gamma \in \T\}$. The identity mapping $ Id: \widetilde{E}^{(r)} [0] \rightarrow E^{(r)} [0]$  has norm $$\n Id: \widetilde{E}^{(r)} [0] \rightarrow E^{(r)} [0]\n  = (1+r^2)^{1/2}.$$
\end{cor}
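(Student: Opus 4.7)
The plan is to reduce the corollary to a single operator-norm computation by invoking Proposition~\ref{startpoint} on each of the two families. First I would verify the rearrangement claim: since $\widetilde E^{(r)}_\gamma = \ell^2_{w^{(r)}(S(\gamma))} = E^{(r)}_{S(\gamma)}$ and $S$ is measure preserving on $\T$, the family $\{\widetilde E^{(r)}_\gamma\}$ is indeed a measure-preserving rearrangement of $\{E^{(r)}_\gamma\}$.

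Next I would identify the two interpolated spaces at $0$. The proof of Proposition~\ref{basicprop} exhibits $F_1$ as an outer function with $F_1^*F_1 = W^{(r)} = W_1$ and $F_2$ as an outer function with $F_2^*F_2 = W^{(r)}\circ S = W_2$, so Proposition~\ref{startpoint} gives
\[
E^{(r)}[0] = \ell^2_{F_1(0)}, \qquad \widetilde E^{(r)}[0] = \ell^2_{F_2(0)}.
\]
Reading off the constant terms from the definitions of $F_1$ and $F_2$,
\[
F_1(0) = \mathrm{diag}\bigl((1+r^2)^{1/4},\,(1+r^2)^{-1/4}\bigr),\quad F_2(0) = \mathrm{diag}\bigl((1+r^2)^{-1/4},\,(1+r^2)^{1/4}\bigr).
\]

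The norm of $Id:\widetilde E^{(r)}[0] \to E^{(r)}[0]$ is then
\[
\sup_{x\ne 0} \frac{\n F_1(0)x\n_{\ell^2_2}}{\n F_2(0)x\n_{\ell^2_2}} = \n F_1(0)F_2(0)^{-1}\n_{op}
\]
after the substitution $y = F_2(0)x$. Since $F_1(0)F_2(0)^{-1}$ is the diagonal matrix $\mathrm{diag}((1+r^2)^{1/2},(1+r^2)^{-1/2})$, its operator norm is $(1+r^2)^{1/2}$, as required.

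There is no real obstacle: once Propositions~\ref{startpoint} and~\ref{basicprop} are in hand, the argument is purely computational. The only small point worth flagging is that the substitution $y = F_2(0)x$ is legitimate because $F_2(0)$ is invertible (both diagonal entries are nonzero), so the supremum over $x$ really does equal the operator norm of $F_1(0)F_2(0)^{-1}$.
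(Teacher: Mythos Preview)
Your proof is correct and follows exactly the same approach as the paper: invoke Proposition~\ref{startpoint} (via the outer functions from Proposition~\ref{basicprop}) to identify $E^{(r)}[0]=\ell^2_{F_1(0)}$ and $\widetilde E^{(r)}[0]=\ell^2_{F_2(0)}$, then reduce the norm of the identity map to the operator norm $\n F_1(0)F_2(0)^{-1}\n_{M_2}=(1+r^2)^{1/2}$. You have simply filled in a bit more computational detail than the paper does.
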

\begin{proof}
Indeed, we have: \begin{align*}  &  \n Id: \widetilde{E}^{(r)} [0] \rightarrow E^{(r)} [0]\n = \sup_{ x \ne 0} \frac{\n F_{W^{(r)}} (0) x \n_{\ell_2^2}}{\n F_{W^{(r)} \circ S }(0)x \n_{\ell_2^2}} \\ & =  \n F_{W^{(r)}}(0) F_{W^{(r)} \circ S} (0)^{-1}  \n_{M_2}  =  (1+r^2)^{1/2}.\end{align*}
\end{proof}

\begin{rem}

By Corollary \ref{continuous} and a suitable discretization argument, we can show that if  $J_k = \Big\{e^{i \theta}: \frac{(k-1)\pi}{4} \le \theta < \frac{k \pi}{4} \Big\},$ for $ 1 \le k \le 8$, and let $\gamma_k \in J_k $ be the center point on $J_k$, then the interpolation families $B^{(r_0)}_\gamma = \ell^2_{w^{(r_0)}(\gamma_k)}$ if $\gamma \in J_k$ and $\widetilde{B}^{(r_0)}_\gamma = \ell^2_{w^{(r_0)}(\bar{\gamma}_k)}$ if $ \gamma \in J_k$ for $r_0 = \sqrt{2+2\sqrt{2}}$ give different interpolation space at 0, i.e., $$\n Id: \widetilde{B}^{(r_0)} [0] \rightarrow B^{(r_0)}[0]\n > 1.$$  We omit its proof, because in the next section, we give a better result by using the formula obtained in \S \ref{approximation-formula}. \end{rem}


\section{Interpolation for three Hilbert spaces}\label{three-hilbert}
In this section, we will show that complex interpolation is not stable even for a familiy taking only 3 distinct Hilbertian spaces. The starting point of this section is Proposition \ref{approximation-formula}. Our proof is somewhat abstract, but it explains why the  3-valued case becomes different from the 2-valued case, the idea used in the proof will be applied further to get a characterization of measurable transformations on $\T$ that perserve complex interpolation at 0. 

\begin{thm}\label{abstract-proof}
There are two different measurable partitions of the unit circle: $$\T  = S_1 \cup S_2 \cup S_3 =  S_1' \cup S_2' \cup S_3', \,\, m(S_k) = m(S_k'), \text{ for } k  =1, 2, 3,$$ and three constant selfadjoint matrices  $\Delta_k \in M_2$ for $k = 1, 2, 3$, such that if we let $$ \Delta = \Delta_1 1_{S_1} + \Delta_2 1_{S_2}  + \Delta_3 1_{S_3} \text{ and } \Delta' = \Delta_1 1_{S_1'} + \Delta_2 1_{S_2'}  + \Delta_3 1_{S_3'},$$ then $$ \sum_{n \ge 1} | \widehat{\Delta}(n) |^2 \ne  \sum_{n \ge 1} | \widehat{\Delta'}(n) |^2.$$
\end{thm}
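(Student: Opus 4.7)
My plan is to use the simplest rearrangement available, namely complex conjugation $S\colon\gamma\mapsto\bar\gamma$, by setting $S_k':=\overline{S_k}$. Then $\Delta'=\Delta\circ S$ and $m(S_k)=m(S_k')$ automatically. Since each $\Delta_k$ is self-adjoint, $\Delta(\gamma)$ is selfadjoint a.e., so $\widehat{\Delta}(-n)=\widehat{\Delta}(n)^*$ and a change of variables gives $\widehat{\Delta'}(n)=\widehat{\Delta}(-n)=\widehat{\Delta}(n)^*$. Consequently
\[
\sum_{n\ge 1}|\widehat{\Delta'}(n)|^{2}=\sum_{n\ge 1}\widehat{\Delta}(n)\widehat{\Delta}(n)^{*},
\]
and the theorem reduces to producing self-adjoint $\Delta_1,\Delta_2,\Delta_3\in M_2$ and a partition with
$\sum_{n\ge 1}\widehat{\Delta}(n)^{*}\widehat{\Delta}(n)\neq\sum_{n\ge 1}\widehat{\Delta}(n)\widehat{\Delta}(n)^{*}$.
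Note that both sides have the same trace (Parseval combined with $\tr(A^*A)=\tr(AA^*)$), so the obstruction I am hunting for lives purely on the matrix (non-normality) level and cannot be seen by any scalar invariant.

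\textbf{The three-fold symmetric partition.} Next I would specialise to the rotationally symmetric partition $S_k=\{e^{i\theta}:2\pi(k-1)/3\le\theta<2\pi k/3\}$, $k=1,2,3$. A direct Fourier computation yields $\widehat{\Delta}(n)=0$ whenever $3\mid n$, and otherwise
\[
\widehat{\Delta}(n)=\frac{1-\omega^{n}}{2\pi in}\bigl(\Delta_1+\omega^{n}\Delta_2+\omega^{2n}\Delta_3\bigr),\qquad \omega:=e^{-2\pi i/3}.
\]
The matrix factor depends only on $n\bmod 3$: it equals $M_1:=\Delta_1+\omega\Delta_2+\omega^{2}\Delta_3$ for $n\equiv 1$ and $M_2=M_1^{*}$ for $n\equiv 2$. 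Using $|1-\omega|^{2}=|1-\omega^{2}|^{2}=3$ and grouping the series by residue class mod $3$, a short bookkeeping gives
\[
\sum_{n\ge 1}\widehat{\Delta}(n)^{*}\widehat{\Delta}(n)-\sum_{n\ge 1}\widehat{\Delta}(n)\widehat{\Delta}(n)^{*}
=\frac{3(\zeta_1-\zeta_2)}{4\pi^{2}}\bigl(M_1^{*}M_1-M_1M_1^{*}\bigr),
\]
where $\zeta_j:=\sum_{m\ge 0}(3m+j)^{-2}$. Since each term $(3m+1)^{-2}-(3m+2)^{-2}$ is strictly positive, $\zeta_1-\zeta_2>0$, and the whole problem collapses to the purely algebraic question of choosing $\Delta_1,\Delta_2,\Delta_3$ so that $M_1$ is \emph{not} a normal matrix.

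\textbf{Finishing with Pauli matrices.} The canonical example of a non-commuting triple of self-adjoint $2\times 2$ matrices is the Pauli triple $\Delta_k=\sigma_k$. Using $\sigma_j\sigma_k=\delta_{jk}I+i\varepsilon_{jkl}\sigma_l$ together with $\omega-\omega^{2}=-i\sqrt 3$ and $\omega^{3}=1$, a direct expansion of $M_1^{*}M_1$ and $M_1M_1^{*}$ gives
\[
M_1^{*}M_1-M_1M_1^{*}=2\sqrt 3\,(\sigma_1+\sigma_2+\sigma_3)\neq 0,
\]
which, combined with $\zeta_1-\zeta_2>0$, completes the proof.

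\textbf{Main obstacle.} The real danger in this plan is cancellation across the infinite Fourier series: non-normality of individual $\widehat{\Delta}(n)$ need not survive integration, and for a generic partition one would have to track delicate oscillatory sums of non-commuting matrices. The three-fold rotational symmetry of the chosen partition is precisely what defuses this: it collapses every nonzero Fourier coefficient onto one of only two orbits $\{M_1,M_1^{*}\}$, so that the whole telescoping reduces to a single commutator times a strictly positive zeta-type constant, and non-normality of $M_1$ is transferred without loss to the infinite sum.
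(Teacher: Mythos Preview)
Your argument is correct, and it is genuinely different from the paper's. The paper does \emph{not} construct an explicit example in the proof itself; instead it argues by contradiction. Assuming that $\sum_{n\ge 1}|\widehat{\Delta}(n)|^2$ is rearrangement-invariant for every $3$-valued self-adjoint $\Delta$, it first removes the self-adjointness constraint by the $2\times 2$ block trick $\Delta\mapsto\begin{pmatrix}0&\Delta^*\\ \Delta&0\end{pmatrix}$, then specialises to scalar $3$-valued functions, and via a polarisation/approximation step bootstraps to the statement that $\|P_{+}(f)\|_2=\|P_{+}(f')\|_2$ for \emph{all} equidistributed $f,f'\in L^\infty(\T)$, which is falsified by $f(\gamma)=\gamma$, $f'(\gamma)=\bar\gamma$. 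Only after the proof does the paper exhibit the three equal arcs as a concrete instance, and even there the verification is done at the scalar level by computing $\Im\langle P_{+}(1_{T_1}),P_{+}(1_{T_2})\rangle$, not by choosing specific matrices $\Delta_k$.

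Your route is a direct construction: the conjugation rearrangement turns the question into comparing $\sum A_n^*A_n$ with $\sum A_nA_n^*$, the three-fold symmetry collapses all nonzero Fourier coefficients onto a single matrix $M_1$ (up to adjoint), and the Pauli choice makes $M_1$ visibly non-normal. This is more explicit and more self-contained for the theorem as stated. What the paper's indirect approach buys, on the other hand, is the scalar identity $\|P_{+}(f\circ\Theta)\|_2=\|P_{+}(f)\|_2$, which is exactly the ingredient it later reuses to prove the converse characterization (that the only interpolation-preserving transformations of $\T$ are origin-preserving inner functions); your construction does not by itself give that.
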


Before turning to the proof of the above theorem, we state our main result. 
 \begin{cor}\label{maincor}
Let $\Delta, \Delta'$ be as in Theorem \ref{abstract-proof}. For $0 < \varepsilon<  \frac{1}{\,\, \n \Delta\n_\infty}$, we define two matrix weights which are perturbation of identity: $$ W_\varepsilon = I + \varepsilon \Delta, \, \, W_\varepsilon' = I + \varepsilon \Delta' .$$ Denote $w_\varepsilon$ and $w_{\varepsilon}'$ the square root of $W_\varepsilon$ and $W'_\varepsilon$ respectively. Then there exists $\varepsilon_0< 1$ such that whenever $0 < \varepsilon < \varepsilon_0$, we have $$| F_{W_{\varepsilon}} (0) |^2 \ne | F_{W_{\varepsilon}'} (0) |^2. $$ Thus, whenever $0 < \varepsilon < \varepsilon_0$,  the following two interpolation families $$\{ \ell^2_{w_{\varepsilon}(\gamma)}: \gamma \in \T\}, \quad \{ \ell^2_{w'_{\varepsilon}(\gamma)}: \gamma \in \T\}$$ have the same distribution and take only 3 distinct values. However, the interpolation spaces at 0 given by these two families are different: $$\ell^2_{F_{W_\varepsilon(0)}} \ne \ell^2_{F_{W'_\varepsilon(0)}} .$$ 
\end{cor}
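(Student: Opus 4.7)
The plan is to combine the second-order expansion of Proposition \ref{degree2} with the non-equality of quadratic Fourier sums supplied by Theorem \ref{abstract-proof}. First, since $S_1 \cup S_2 \cup S_3$ and $S_1' \cup S_2' \cup S_3'$ are partitions with the same measures $m(S_k) = m(S_k')$, the average is preserved:
\begin{equation*}
\widehat{\Delta}(0) \;=\; \sum_{k=1}^{3} m(S_k)\,\Delta_k \;=\; \sum_{k=1}^{3} m(S_k')\,\Delta_k \;=\; \widehat{\Delta'}(0).
\end{equation*}
Moreover, by construction both $\Delta$ and $\Delta'$ are selfadjoint and $L_\infty$-bounded by $\|\Delta\|_\infty = \max_k \|\Delta_k\|_{M_2}$. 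Hence Proposition \ref{degree2} applies uniformly for $\varepsilon \in [0, \varepsilon_1)$ with $\varepsilon_1 := \|\Delta\|_\infty^{-1}$.

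Next, I would subtract the two expansions from \eqref{approximation}: using $\widehat{\Delta}(0) = \widehat{\Delta'}(0)$, the $\varepsilon^0$ and $\varepsilon^1$ terms cancel, and one obtains
\begin{equation*}
|F_{W_\varepsilon}(0)|^2 - |F_{W'_\varepsilon}(0)|^2 \;=\; -\,\varepsilon^2 \Bigl(\sum_{n\ge 1}|\widehat{\Delta}(n)|^2 - \sum_{n\ge 1}|\widehat{\Delta'}(n)|^2\Bigr) + \mathcal{O}(\varepsilon^3).
\end{equation*}
By Theorem \ref{abstract-proof}, the bracketed matrix is a fixed nonzero selfadjoint element of $M_2$; call it $R$. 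Choose $\varepsilon_0 \in (0, \varepsilon_1)$ small enough that the $\mathcal{O}(\varepsilon^3)$ remainder in operator norm is strictly less than $\varepsilon^2 \|R\|_{M_2}$ for all $0 < \varepsilon < \varepsilon_0$ (this is possible because the implicit constants in the remainder depend only on $\|\Delta\|_\infty$, which is fixed). Then the difference $|F_{W_\varepsilon}(0)|^2 - |F_{W'_\varepsilon}(0)|^2$ is nonzero as a matrix, which is the first conclusion.

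For the second conclusion, I would invoke Proposition \ref{startpoint}: the interpolation spaces at $0$ are $\ell^2_{F_{W_\varepsilon}(0)}$ and $\ell^2_{F_{W'_\varepsilon}(0)}$ respectively, and the norm on $\ell^2_A$ depends only on $A^*A = |A|^2$. Since $|F_{W_\varepsilon}(0)|^2 \neq |F_{W'_\varepsilon}(0)|^2$, the two norms on $\C^2$ are distinct, so the interpolated spaces differ. Finally, $W'_\varepsilon$ is obtained from $W_\varepsilon$ by composing with a measure-preserving rearrangement of $\T$ that permutes $S_k$ onto $S_k'$, so the two families have the same distribution and take the same three values $\{I + \varepsilon\Delta_1, I+\varepsilon\Delta_2, I+\varepsilon\Delta_3\}$.

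The only mildly delicate point is the uniform control of the $\mathcal{O}(\varepsilon^3)$ term; this is essentially cosmetic, since the series in Proposition \ref{degree2} converges geometrically with ratio controlled by $\varepsilon \|\Delta\|_\infty$, and truncating at order two produces a remainder of norm $\lesssim \varepsilon^3$ with constants depending only on $\|\Delta\|_\infty$. Everything else is a direct substitution into the approximation formula \eqref{approximation}.
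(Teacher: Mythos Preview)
Your argument is correct and follows exactly the route the paper takes: it is an immediate combination of Proposition~\ref{degree2} (the second-order expansion) and Theorem~\ref{abstract-proof} (the nonvanishing of the $\varepsilon^2$ coefficient), with Proposition~\ref{startpoint} supplying the identification of the interpolated space. Your write-up simply makes explicit the cancellation of the zeroth and first order terms via $\widehat{\Delta}(0)=\widehat{\Delta'}(0)$ and the choice of $\varepsilon_0$, which the paper leaves implicit in the phrase ``immediate corollary.''
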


\begin{proof}
This  is an immediate  corollary of Proposition \ref{degree2} and Theorem \ref{abstract-proof}. The last assertion follows from Proposition \ref{startpoint}.
\end{proof}
 
\begin{rem}
We verify that in the two main cases where the interpolation is stable under rearrangement, the function $\Delta \mapsto \sum_{n \ge 1} | \hat{\Delta}(n)|^2$ is stable under rearrangement. Note first that we have the following matrix identity: $$ \sum_{n \ge 1} | \widehat{\Delta}(n)|^2 =  \int |P_{+}(\Delta)|^2 dm.$$ 
\begin{itemize} 
\item  2-valued case: If $\Delta$ is a 2-valued selfadjoint function, i.e, there is a measurable subset $A\subset \T$ and two selfadjoint matrices  $\Delta_1, \Delta_2 \in M_N$, such that $\Delta = \Delta_1 1_A+ \Delta_2 1_{A^{c}}$ then \begin{align*} \sum_{n \ge 1} | \widehat{\Delta}(n)|^2 =&  \int |P_{+}(\Delta)|^2 dm   = \int \Big|P_{+} \Big((\Delta_1 - \Delta_2)1_A  + \Delta_2 \Big)\Big|^2 dm \\  = & | \Delta_1-\Delta_2|^2 \int |P_{+} (1_A)|^2 dm \\ =&  \frac{m(A)- m(A)^2}{2}  | \Delta_1-\Delta_2|^2 ,\end{align*} which depends on the measure of $A$ but not the other structure of $A$. 

More generally, we note in passing that for any real valued $f$ in $L_2(\T)$ the expression $2 \n P_{+}(f)\n_2^2 = 2 \sum_{n \ge 1} | \widehat{f}(n)|^2$ coincides with the variance of $f$.

\item Rearrangement under inner functions: Let $\varphi: \T\rightarrow\T$ be the boundary value of an origin-preserving inner function. Assume $\Delta: \T \rightarrow M_N$ selfadjoint. Note that $P_{+}(\Delta \circ \varphi)  = P_{+}(\Delta) \circ \varphi$ and that $\varphi$ preserves the measure $m$. Hence \begin{align*}\sum_{n\ge 1} | \widehat{(\Delta \circ \varphi)}(n)|^2 = & \int | P_{+}(\Delta\circ\varphi) |^2  dm =\int | P_{+}(\Delta)\circ\varphi |^2  dm  \\  = & \int | P_{+}(\Delta) |^2  dm =  \sum_{n \ge 1} | \widehat{\Delta}(n)|^2 .\end{align*}
\end{itemize} 

\end{rem}

\begin{proof}[Proof of Theorem \ref{abstract-proof}] 
Assume by contradiction that for any pair of 3-valued selfadjoint functions $\Delta$ and $\Delta'$ as in the statement of Theorem \ref{abstract-proof}, we have \begin{eqnarray}\label{contra-assumption}\sum_{n \ge 1} | \widehat{\Delta}(n)|^2 = \sum_{n \ge 1} | \widehat{\Delta'}(n)|^2.\end{eqnarray} We make the following reduction.

{\flushleft \bf Step 1:} The above assumption implies that for any pair of functions, $\Delta, \Delta' $ taking values in the same set of three matrices and having identical distribution, the equation \eqref{contra-assumption} holds as well. Indeed, given such a pair, we can consider the  pair of selfadjoint functions which are still 3-valued:  $$\gamma \rightarrow \left[ \begin{array}{cc} 0 & \Delta(\gamma)^* \\ \Delta(\gamma) & 0 \end{array}\right] \text{ and } \gamma \rightarrow \left[ \begin{array}{cc} 0 & \Delta'(\gamma)^* \\ \Delta'(\gamma) & 0 \end{array}\right].$$ Then the square of the $n$-th Fourier coefficient becomes $$\left[\begin{array}{cc} | \widehat{\Delta}(n)|^2 & 0 \\ 0 & | \widehat{\Delta^*}(n)|^2 \end{array}\right] \text{ and } \left[\begin{array}{cc} | \widehat{\Delta'}(n)|^2 & 0 \\ 0 & | \widehat{\Delta'^*}(n)|^2 \end{array}\right]   $$ respectively. The block (1, 1)-terms then give the desired equation. 

{\flushleft \bf Step 2:} If we take $N = 1$ in the above step, then the conclusion is that for any pair of 3-valued scalar functions $f, f'\in L_\infty(\T)$ such that $f \stackrel{d}{=} f'$, we have $ \sum_{n \ge 1} | \widehat{ f} (n)|^2 = \sum_{n \ge 1}| \widehat{f'}(n)|^2$, or  equivalently, $$\n P_{+}(f) \n_2^2 = \n P_{+}(f')\n_2^2.$$ 

{\flushleft \bf Consequence I:} Under the above assumption, if  $(A_1, A_2)$ is a pair of two disjoint measurable subsets of $\T$, and $(A_1', A_2')$ is another  such pair such that $m(A_1) = m(A_1')$ and $m(A_2) = m(A_2')$, then \begin{eqnarray}\label{claim}\langle P_{+}(1_{A_1}), P_{+}(1_{A_2}) \rangle_{L^2(\T)} = \langle P_{+}(1_{A'_1}), P_{+}(1_{A'_2}) \rangle_{L^2(\T)}  \end{eqnarray} Indeed, if we define $A_3: = \T \setminus (A_1 \cup A_2)$ and  $A'_3: = \T \setminus (A'_1 \cup A'_2)$. For any $\alpha \in \C, \alpha \ne 0 , 1$, consider $$f_\alpha = \alpha 1_{A_1} + 1_{A_2} + 0 \times 1_{A_3}, \quad f'_\alpha = \alpha 1_{A'_1} + 1_{A'_2} + 0 \times 1_{A'_3},$$ then $f_\alpha $ and $f'_\alpha$ are two functions taking exactly 3 values 0, 1, $\alpha$ and $f_\alpha \stackrel{d}{=} f'_\alpha$. Hence by the assumption, we have \begin{align}\label{alpha-value} \n \alpha P_{+}(1_{A_1}) + P_{+}(1_{A_2})\n_2^2 = \n \alpha P_{+}(1_{A'_1}) + P_{+}(1_{A'_2})\n_2^2, \text{ for any } \alpha \in \C.\end{align} Note that for any measurable set $A$, since $1_A$ is real, \begin{eqnarray}\label{real} \n P_{+}(1_A)\n_2^2  = \frac{m(A)-m(A)^2}{2}\end{eqnarray} Taking this in consideration, the equation \eqref{alpha-value} implies that $$\Re\Big(\alpha\langle P_{+}(1_{A_1}), P_{+}(1_{A_2}) \rangle\Big) = \Re\Big(\alpha\langle P_{+}(1_{A'_1}), P_{+}(1_{A'_2}) \rangle\Big), \text{ for any } \alpha \in \C, $$ hence the equation \eqref{claim} holds.  

{\flushleft \bf Step 3:} We can deduce from our assumption the following consequence. 
{\flushleft \bf Consequence II:} For any pair of scalar functions  $f, f'\in L_\infty(\T)$  (without the assumption that they are both 3-valued), such that $f \stackrel{d}{=} f'$ , we have $\n P_{+}(f) \n_2 = \n P_{+}(f')\n_2.$ 

Indeed, if $$ f= \sum_{k  =1}^n f_k 1_{A_k}, \quad f' = \sum_{k  =1}^n f_k 1_{A'_k}, $$ where $(A_k)_{k=1}^n$ are disjoint subsets of $\T$, so is $(A'_k)_{k=1}^n$, moreover $m(A_k) = m(A_k')$. By \eqref{claim} and \eqref{real}, we have \begin{eqnarray*} \n P_{+}(f) \n_2^2 & = & \sum_{k =1}^n | f_k|^2 \cdot \n P_{+}(1_{A_k})\n_2^2 + \sum_{1\le k  \ne l \le n } f_k f_l  \langle P_{+}(1_{A_1}), P_{+}(1_{A_2}) \rangle \\ &  = &  \sum_{k =1}^n | f_k|^2 \cdot \n P_{+}(1_{A'_k})\n_2^2 + \sum_{1\le k  \ne l \le n } f_k f_l  \langle P_{+}(1_{A'_1}), P_{+}(1_{A'_2}) \rangle \\ & = &   \n P_{+}(f') \n_2^2.\end{eqnarray*} Then by an approximation argument,  more precisely, by using the fact that two functions $f, f' \in L^2(\T)$ such that $f \stackrel{d}{=} f$  can be approximated in $L^2(\T)$ by two sequences of simple functions $(g_n)$ and $(g_n')$ such that $g_n \stackrel{d}{=} g_n'$, we can extend the above equality for pairs of equidistributed simple functions to the general equidistributed pairs of functions, as stated in Consequence II.

{\flushleft \bf Step 4:} Now if we take $f, f' \in L_\infty(\T)$ to be  $f(\gamma) = \gamma$ and $f'(\gamma)= \overline{\gamma}$, then $f\stackrel{d}{=} f'$, but we have $ \n P_{+}(f)\n_2 = 1 \ne 0 = \n P_{+}(f')\n_2,$ which contradicts Consequence II. This completes the proof.

\end{proof}

Define $$T_k: = \Big\{ e^{i \theta}| \frac{2 (k-1)\pi}{3} \le \theta < \frac{2k\pi}{3}\Big\} \text{ for } k = 1, 2, 3.$$ We claim that in Theorem \ref{abstract-proof} and hence in Corollary \ref{maincor}, we can take for example $$S_1 = S_1' = T_1, \quad S_2 = S_3'= T_2,\quad S_3 = S_2' = T_3.$$ Indeed, by the proof of Theorem \ref{abstract-proof}, here  we only need to show that \begin{eqnarray*}\langle P_{+} (1_{T_1}), P_{+}(1_{T_2} )\rangle \ne  \langle P_{+} (1_{T_1}), P_{+}(1_{T_3} )\rangle. \end{eqnarray*} Since $1_{T_1}(\gamma)= 1_{T_3} (e^{- i\frac{2 \pi}{3}} \gamma)$ and  $1_{T_2}(\gamma)= 1_{T_1} (e^{- i\frac{2 \pi}{3}} \gamma)$, we have $$\langle P_{+} (1_{T_1}), P_{+}(1_{T_3} )\rangle = \langle P_{+} (1_{T_2}), P_{+}(1_{T_1} )\rangle.$$ Thus we only need to show that \begin{eqnarray}\label{three-arc}  \Im \Big(\langle P_{+} (1_{T_1}), P_{+}(1_{T_2} )\rangle  \Big) \ne 0.\end{eqnarray} Note that $$ \Im \Big(\widehat{1_{T_1}}(n) \overline{\widehat{1_{T_2}}(n)}\Big)= \frac{\sin\frac{2\pi}{3}(1-\cos\frac{2\pi}{3})}{2\pi^2 n^2} \times \left \{ \begin{array}{cl} 0, &  \text{ if } n \equiv 0\mod3;\\ 1, & \text{ if } n \equiv 1 \mod3;\\ - 1, &  \text{ if } n\equiv 2 \mod 3.\end{array} \right.$$
Hence \begin{eqnarray*} \Im \Big(\langle P_{+} (1_{T_1}), P_{+}(1_{T_2} )\rangle  \Big)  & = &  \frac{3 \sin\frac{2\pi}{3}(1-\cos\frac{2\pi}{3})}{2\pi^2}\sum_{k = 0}^\infty \frac{2k + 1}{(3k+1)^2(3k+2)^2},  \end{eqnarray*} which is non-zero, as we expected.

The same idea as in the proof of Theorem \ref{abstract-proof} yields the following characterization:  combining with Proposition \ref{invariant-inner}, we have characterized all measurable transformations on $\T$ that preserve complex interpolation at 0. At this stage, the proof is quite direct.

\begin{thm}\label{car}
Let $\Theta: \T \rightarrow \T$ be a measurable transformation. If for any interpolation family $\{E_\gamma; \gamma \in \T\}$, we have $$\{E_\gamma: \gamma \in \T\}[0] = \{ E_{\Theta(\gamma)}: \gamma\in \T\}[0],$$ then $\Theta$ is an inner function and $\hat{\Theta}(0) = 0$. 
\end{thm}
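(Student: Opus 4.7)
The plan is to test the hypothesis on the Hilbertian interpolation families built from perturbations of the identity weight, and read off constraints on $\Theta$ from the Taylor expansion of $|F_W(0)|^2$ furnished by Proposition \ref{degree2}. Concretely, fix any bounded selfadjoint $\Delta: \T \rightarrow M_N$ and take $\varepsilon > 0$ small enough that $\|\varepsilon \Delta\|_\infty < 1$. Setting $W_\varepsilon = I + \varepsilon \Delta$ and $w_\varepsilon = \sqrt{W_\varepsilon}$, Proposition \ref{startpoint} identifies
$$\{\ell^2_{w_\varepsilon(\gamma)} : \gamma \in \T\}[0] = \ell^2_{F_{W_\varepsilon}(0)}, \qquad \{\ell^2_{w_\varepsilon(\Theta(\gamma))} : \gamma \in \T\}[0] = \ell^2_{F_{W_\varepsilon \circ \Theta}(0)},$$
so the assumed invariance of complex interpolation at $0$ under $\Theta$ forces $|F_{W_\varepsilon}(0)|^2 = |F_{W_\varepsilon \circ \Theta}(0)|^2$ for all such $\varepsilon$.

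Since $W_\varepsilon \circ \Theta = I + \varepsilon (\Delta \circ \Theta)$ is again a perturbation of $I$ with the same bound, Proposition \ref{degree2} applies to both sides and matching the $\varepsilon^1$ and $\varepsilon^2$ terms yields
\begin{align*}
\widehat{\Delta}(0) = \widehat{\Delta \circ \Theta}(0), \qquad \sum_{n \ge 1} |\widehat{\Delta}(n)|^2 = \sum_{n \ge 1} |\widehat{\Delta \circ \Theta}(n)|^2.
\end{align*}
Taking $\Delta = f \cdot I$ for real-valued $f \in L_\infty(\T)$ the first identity is $\int f\,dm = \int f \circ \Theta\,dm$, so $\Theta$ is measure-preserving. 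For complex-valued scalar $f \in L_\infty(\T)$ I will apply the second identity to the $M_2$-valued selfadjoint function
$$\Delta_f(\gamma) = \begin{pmatrix} 0 & \overline{f(\gamma)} \\ f(\gamma) & 0 \end{pmatrix},$$
exactly as in Step 1 of the proof of Theorem \ref{abstract-proof}: the $(1,1)$-block of $|\widehat{\Delta_f}(n)|^2$ equals $|\widehat{f}(n)|^2$, so the identity descends to
$$\|P_{+}(f)\|_2 = \|P_{+}(f \circ \Theta)\|_2 \quad \text{for every } f \in L_\infty(\T).$$

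The proof then closes by specializing to $f(\gamma) = \gamma^n$ with $n \ge 1$. Then $P_{+}(f) = \gamma^n$ has $L^2$-norm $1$ and $f \circ \Theta = \Theta^n$ has $L^2$-norm $1$ because $|\Theta| = 1$ a.e. on $\T$. So $\|P_{+}(\Theta^n)\|_2 = \|\Theta^n\|_2 = 1$, and the orthogonal decomposition $L^2(\T) = \overline{H^2_0(\T)} \oplus \C \oplus H^2_0(\T)$ forces $P_{-}(\Theta^n) = 0$ and $\widehat{\Theta^n}(0) = 0$ for every $n \ge 1$. In particular $\Theta \in H^2(\T)$, and because $|\Theta| = 1$ a.e.\ this means $\Theta$ is inner; the case $n=1$ moreover gives $\widehat{\Theta}(0) = 0$, which is the required conclusion.

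The main technical hurdle is the passage from selfadjoint matrix-valued $\Delta$ (to which the approximation formula applies) to arbitrary complex scalar $f$; this is precisely the block-matrix symmetrization trick already used in Theorem \ref{abstract-proof}, and after it the probe $f = \gamma^n$ converts the $L^2$ invariance of $P_{+}$ directly into the innerness of $\Theta$ without any further analytic work.
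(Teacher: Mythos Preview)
Your proof is correct and follows essentially the same route as the paper: use Propositions \ref{degree2} and \ref{startpoint} together with the block-matrix symmetrization trick from Theorem \ref{abstract-proof} to obtain $\|P_{+}(f)\|_2 = \|P_{+}(f\circ\Theta)\|_2$ for all $f\in L_\infty(\T)$, then specialize to monomials. The only cosmetic difference is that the paper tests with $f(\gamma)=\overline{\gamma}$ (to get $\Theta\in H^\infty$) and then $f(\gamma)=\gamma$ (to get $\widehat{\Theta}(0)=0$), whereas you test with $f(\gamma)=\gamma$ alone and read both conclusions off the orthogonal decomposition at once; this is a harmless and slightly more economical variant.
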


\begin{rem}
The main point of Theorem \ref{car} is to characterize all the transformations which preserve the interpolation spaces at origin.
\end{rem}

\begin{proof}
 It suffices to show that $\Theta \in H_0^\infty(\T)$, since by definition $\Theta(\gamma)$ has modulus 1 for $a.e. \,\gamma\in \T$. By Propositions \ref{degree2}, \ref{startpoint} and similar arguments in the proof of Theorem \ref{abstract-proof}, we have \begin{eqnarray}\label{L2normeq} \n P_{+}(f \circ \Theta) \n_2 = \n P_{+} (f) \n_2, \text{ for any scalar function } f \in L_\infty(\T). \end{eqnarray} Now take $f(\gamma) = \overline{\gamma}$, we have $\n P_{+} (\overline{\Theta})\n_2 = \n P_{+}(\overline{\gamma})\n_2 =  0$, which implies that $\overline{\Theta} \in 
 \overline{H^\infty(\T)}$ and hence $\Theta \in H^\infty(\T)$. Then we can write $\Theta = \widehat{\Theta}(0) + P_{+}(\Theta)$. In \eqref{L2normeq}, if we take $f(\gamma)= \gamma$, then $ \n P_{+}(\Theta)\n_2= \n P_{+}(\gamma)\n_2 = 1$. Note that $$1 = \n \Theta \n_2^2 = | \widehat{\Theta}(0)|^2 + \n P_{+}(\Theta)\n_2^2,$$ whence $\widehat{\Theta}(0) = 0$. This completes the proof. 
 \end{proof}

\section{Some related comments}\label{related-comments}
Recall that an $N$-dimensional (complex) Banach space $\mathscr{L}$ is called a (complex) Banach lattice with respect to a fixed basis $(e_1, \cdots, e_N)$ of $\mathscr{L}$ if it satisfies the lattice axiom: For any $x_k, y_k\in \C$ such that $| x_k| \le |y_k|$ for all $1\le k \le N$, $$\n \sum_{k = 1}^N x_k e_k \n_{\mathscr{L}} \le \n \sum_{k= 1}^N y_k e_k \n_{\mathscr{L}}.$$ Thus in particular, $$\n \sum_{k = 1}^N x_k e_k \n_{\mathscr{L}} = \n \sum_{k= 1}^N |x_k| e_k \n_{\mathscr{L}}.$$ The above fixed basis $(e_1, \cdots, e_N)$ will be called a lattice-basis of $\mathscr{L}$. Such a Banach lattice $\mathscr{L}$ will be viewed as function spaces over the $N$-point set $[N] = \{1, \cdots, N\}$ in such a way that $e_k$ corresponds to the Dirac function at the point $k$. Thus for $x, y \in \mathscr{L}$, we can write $| x | \le | y |$ if $ |x_k| \le | y_k|$ for all $ 1 \le k \le N$, and $\log | x | = \sum_{k=1}^N \log | x_k| e_k$, suppose that $x_k \ne 0$ for all $1\le k \le N$.

We will call $\{\mathscr{L}_\gamma = (\C^N, \n \cdot\n_\gamma): \gamma \in \T\}$ a family of \textit{compatible} Banach lattices, if there is an algebraic basis $(e_1, \cdots, e_N)$ of $\C^N$ which is simultaneously a lattice-basis of $\mathscr{L}_\gamma$ for a.e. $\gamma \in \T$ and such that \begin{eqnarray}\label{compatible-lattice} 0 <  \underset{\gamma \in \T}{\text{ess inf }} \n e_k \n_\gamma \le  \underset{\gamma\in \T}{\esssup} \n e_k \n_\gamma < \infty  \text{ for all }   1 \le k \le N. \end{eqnarray}


In the sequel, the notation $\{\mathscr{L}_\gamma = (\C^N, \n \cdot\n_\gamma): \gamma \in \T\}$ is reserved for a family of compatible Banach lattices with respect to the \textit{canonical basis} of $\C^N$.

Complex interpolation at 0 for families of compatible Banach lattices is stable under any rearrangement. The proof of the following proposition is standard.

\begin{prop}\label{proplattice}
If $\{ \mathscr{L}_\gamma = (\C^N, \n \cdot\n_\gamma): \gamma \in \T\}$ be an interpolation family of compatible Banach lattices, then \begin{eqnarray}\label{description-lattice}\log \n x\n_{\mathscr{L}[0]} = \inf  \int \log \n f(\gamma)\n_\gamma \, dm(\gamma),\end{eqnarray} where the infimum runs over the set of all  measurable coordinate bounded functions $f: \T \rightarrow \C^N$, i.e., $f_k: \T \rightarrow \C$  is bounded for all $1\le k \le N$  such that $($ by convention $\log 0 : = - \infty$ $)$ $$\log |x| \le \int \log | f(\gamma) | \, dm(\gamma).$$

In particular, if $M: \T\rightarrow \T$ is measure preserving and let $\{ \widetilde{\mathscr{L}}_\gamma= \mathscr{L}_{M(\gamma)}: \gamma \in \T\}$, then $$Id: \mathscr{L}[0] \rightarrow \widetilde{\mathscr{L}}[0]$$ is isometric.
\end{prop}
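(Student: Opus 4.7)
My plan is to establish the formula \eqref{description-lattice} first; once it is available, the rearrangement assertion follows with little extra work. I rely throughout on the equivalent form of the interpolation norm at $0$ recalled in the introduction (see \cite[Prop.\ 2.4]{CCRSW}),
$$\log \n x\n_{\mathscr{L}[0]} \;=\; \inf_{F}\; \int \log \n F(\gamma)\n_{\gamma}\, dm(\gamma),$$
where $F$ ranges over bounded analytic functions $F:\D\to\C^N$ with $F(0)=x$. The inequality $\ge$ in \eqref{description-lattice} is then immediate from Jensen's inequality: for any such $F$, each coordinate $F_k\in H^\infty(\D)$ satisfies $\log|x_k|=\log|F_k(0)|\le\int\log|F_k(\gamma)|\,dm$, so the boundary trace $f:=F|_\T$ is an admissible test function in \eqref{description-lattice} and yields the same integral as $F$.

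For the $\le$ direction, starting from an admissible coordinate-bounded $f$, I would construct an analytic $F:\D\to\C^N$ with $F(0)=x$ whose boundary values are coordinatewise dominated by $|f|$. For each $k$ with $x_k\neq 0$, the admissibility constraint forces $\int\log|f_k|\,dm\ge\log|x_k|>-\infty$ and, together with the essential boundedness of $f_k$, implies $\log|f_k|\in L^1(m)$; hence the classical outer function
$$O_k(z) \;=\; \exp\!\left(\int \frac{\gamma+z}{\gamma-z}\,\log|f_k(\gamma)|\,dm(\gamma)\right)$$
lies in $H^\infty(\D)$ with $|O_k(\gamma)|=|f_k(\gamma)|$ a.e.\ on $\T$ and $O_k(0)=\exp\int\log|f_k|\,dm$. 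Since $|O_k(0)|\ge|x_k|$, the scaling $F_k:=(x_k/O_k(0))\,O_k$ (and $F_k\equiv 0$ whenever $x_k=0$) belongs to $H^\infty(\D)$, satisfies $F_k(0)=x_k$ and $|F_k|\le|f_k|$ a.e.\ on $\T$. The lattice axiom, applied in each $\mathscr{L}_\gamma$ with respect to the common basis $(e_1,\dots,e_N)$, then gives $\n F(\gamma)\n_\gamma\le\n f(\gamma)\n_\gamma$ a.e., whence
$$\log\n x\n_{\mathscr{L}[0]}\le\int\log\n F\n_\gamma\,dm\le\int\log\n f\n_\gamma\,dm.$$

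For the rearrangement invariance, given a measure-preserving $M:\T\to\T$, the substitution $g:=f\circ M$ sends admissible $f$ for $\{\mathscr{L}_\gamma\}$ to admissible $g$ for $\{\widetilde{\mathscr{L}}_\gamma\}$, and a change of variables gives $\int\log\n g(\gamma)\n_{M(\gamma)}\,dm=\int\log\n f\n_\gamma\,dm$, yielding $\n x\n_{\widetilde{\mathscr{L}}[0]}\le\n x\n_{\mathscr{L}[0]}$. The reverse inequality is not automatic when $M$ is not a.e.\ invertible, but it follows from the observation that the right-hand side of \eqref{description-lattice} depends only on the push-forward of $m$ under $\gamma\mapsto\n\cdot\n_\gamma$: since $(\T,m)$ is atomless, every joint distribution on (lattice norms)\,$\times\,\C^N$ with the correct first marginal can be realised by some admissible $f$, so the infimum is an invariant of that push-forward measure. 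The main technical point I anticipate is the construction of the outer functions in the second paragraph, in particular the handling of vanishing coordinates $x_k$ and the verification $F\in H^\infty$; both rely on the compatibility hypothesis \eqref{compatible-lattice} together with the coordinate-boundedness of $f$. The rearrangement step is then routine once \eqref{description-lattice} is in hand.
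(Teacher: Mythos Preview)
Your proof of the formula \eqref{description-lattice} is correct and follows the same outer-function route as the paper. The only cosmetic difference is that the paper does not rescale the outer function to hit $x$ at the origin; instead it lets $g_k$ be the outer function with $|g_k|=|f_k|$ on $\T$, sets $y=|g(0)|\ge|x|$, and then uses that $\mathscr{L}[0]$ is itself a Banach lattice to pass from $\|y\|_{\mathscr{L}[0]}$ down to $\|x\|_{\mathscr{L}[0]}$. Your scaling trick avoids ever invoking the lattice property of $\mathscr{L}[0]$, which is a small simplification.

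There is, however, a genuine gap in your treatment of the rearrangement assertion for non-invertible $M$. The claim that ``since $(\T,m)$ is atomless, every joint distribution on (lattice norms)\,$\times\,\C^N$ with the correct first marginal can be realised by some admissible $f$'' is false: if $\gamma\mapsto\n\cdot\n_\gamma$ happens to be injective, then for any measurable $f$ the pair $(\n\cdot\n_\gamma,f(\gamma))$ is supported on a graph, so only joint laws with Dirac conditional second marginals are realisable. Thus the infimum in \eqref{description-lattice} is \emph{a priori} taken over a set of couplings that depends on the parametrisation $\gamma\mapsto\n\cdot\n_\gamma$, not merely on its law. The paper sidesteps this entirely: it regards the ``in particular'' as immediate once \eqref{description-lattice} is proved, which is literally true when $M$ is invertible (the substitution $f\leftrightarrow f\circ M$ is then a bijection between admissible test functions). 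If you want the non-invertible case, one clean fix is to show that in \eqref{description-lattice} one may restrict to test functions of the form $f=h\circ(\n\cdot\n_\gamma)$: replace an arbitrary admissible $f\ge0$ by $\tilde f$ with $\log\tilde f_k=\mathbb{E}[\log f_k\mid \n\cdot\n_\gamma]$; the constraints are preserved, and the objective does not increase because $u\mapsto\log\n(e^{u_1},\dots,e^{u_N})\n$ is convex on $\R^N$ for any lattice norm (by AM--GM, $a^tb^{1-t}\le ta+(1-t)b$ coordinatewise, hence $\n a^tb^{1-t}\n\le\n a\n^t\n b\n^{1-t}$). For such $f$ the value of the infimum visibly depends only on the push-forward of $m$ under $\gamma\mapsto\n\cdot\n_\gamma$.
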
 
\begin{proof}
It suffices to show \eqref{description-lattice}. Assume that $x \in \C^N$ and $\n x\n_{\mathscr{L}[0]}< \lambda$. Without loss of generality, we can assume $x_k \ne 0$ for all $1\le k \le N$.   By the definition of $\mathscr{L}[0]$ there exists an analytic function $f = (f_1, \cdots, f_N): \D \rightarrow \C^N$ such that $$f(0) = x \text{ and } \underset{\gamma \in \T}{\esssup} \n f(\gamma) \n_\gamma < \lambda.$$ By \eqref{compatible-lattice}, this implies in particular that $f$ is coordinate bounded. Since $ z \mapsto \log | f_k(z)|$ is subharmonic, we have $$\log  | x_k |  = \log | f_k(0)| \le \int \log | f_k(\gamma) | dm(\gamma), \text{ for } 1 \le k \le N. $$ Hence $ \log  | x | \le \int \log | f(\gamma) | dm(\gamma) $. Obviously, $ \int \log \n f(\gamma) \n_\gamma dm(\gamma) < \log \lambda$, whence $$ \inf  \int \log \n f(\gamma)\n_\gamma \, dm(\gamma) \le \log \n x\n_{\mathscr{L}[0]}.$$

Conversely, assume that $x \in \C^N$ and $ x_k \ne 0$ for all $1\le k \le N$ and let $f: \D \rightarrow \C^N$ be any coordinate bounded analytic function such that   $ \log | x | \le \int \log | f(\gamma) | dm(\gamma)$. Then by \eqref{compatible-lattice}, $\underset{\gamma \in \T}{\esssup} \n f(\gamma)\n_\gamma < \infty$ and there exists $y \in \C^N$ such that \begin{eqnarray}\label{xy}| x | \le | y| \text{ and } \log |y|  = \int \log | f(\gamma) | dm(\gamma) .\end{eqnarray} Define $u(\gamma): = \log | f(\gamma)| $. By assumption, $x_k\ne 0$ and $ f_k $ is bounded, hence $\log |f_k| \in L_1(\T)$, so we can define the Hilbert transform of $u_k$. Let $\tilde{u}(\gamma)$ be the Hilbert transform of $u(\gamma)$ and define $g(\gamma) = e^{u(\gamma) + i \tilde{u}(\gamma)}$. Then $g_k (\gamma)= e^{u_k(\gamma) + i \tilde{u}_k(\gamma)}$ is the boundary value of an outer function, hence $$\log |g_k(0)| = \int \log | g_k(\gamma) | dm(t) = \int u_k(\gamma) dm(\gamma) = \log | y_k| .$$ Thus $| y | = | g(0)|$. By \cite[Prop. 2.4]{CCRSW}, we have \begin{eqnarray*} \n y \n_{\mathscr{L}[0]} & = & \n g(0) \n_{\mathscr{L}[0]}\le \exp\Big(\int \log \n g(\gamma)\n_\gamma \,dm(\gamma)\Big)\nonumber \\ & = & \exp\Big(\int \log \n f(\gamma)\n_\gamma \,dm(t)\Big) .  \end{eqnarray*} It is easy to see that $\mathscr{L}[0]$ is a Banach lattice and by \eqref{xy},  $$\n x \n_{\mathscr{L}[0]} \le \n y \n_{\mathscr{L}[0]}. $$ Thus $$\log \n x \n_{\mathscr{L}[0]} \le \log \n y \n_{\mathscr{L}[0]} \le  \int \log \n f(\gamma) \n_\gamma \, dm(\gamma).$$ This proves the converse inequality.
\end{proof}

\begin{rem}
The preceding result should be compared with \cite[Cor. 5.2]{CCRSW}, where it is shown that $$\Big\{ L^{p_\gamma}(X, \Sigma, \mu): \gamma \in \T\Big\} [z] = L^{p_z}(X, \Sigma, \mu),$$ where $1/p_z = \int (1/p_\gamma) P_z(d\gamma)$.
\end{rem}

\begin{defn}
Let $\mathscr{L} = (\C^N, \n \cdot \n_{\mathscr{L}})$ be a symmetric Banach lattices, we define $S_{\mathscr{L}}$ to be the space of $N\times N$ matrices equipped with the norm : $$ \n A \n_{S_{\mathscr{L}}} = \n (s_1(A), \cdots, s_N(A)) \n_{\mathscr{L}}, $$ where $s_1(A), \cdots, S_N(A)$ are singular numbers of the matrix $A$.
\end{defn}

If the Banach lattices $\mathscr{L}_\gamma$ considered above are all symmetric, i.e., for any permutation $\sigma \in \mathfrak{S}_N$ and any $x_k \in \C$,  $$\n \sum_{k=1}^N x_k e_{\sigma(k)} \n_{\mathscr{L}_\gamma} =    \n \sum_{k=1}^N x_k e_k \n_{\mathscr{L}_\gamma},$$ then to each $\mathscr{L}_\gamma$ is associated a Schatten type space $S_{\mathscr{L}_\gamma} = (M_N, \n\cdot\n_{S_{\mathscr{L}_\gamma}})$. 

The following proposition is classical (c.f. \cite{Pietsch1}), we omit its proof.
\begin{prop}\label{schatten-inter}
Let $\{ \mathscr{L}_\gamma = (\C^N, \n \cdot\n_\gamma): \gamma \in \T\}$ be an interpolation family of compatible symmetric Banach lattices and consider the associated interpolation family: $$\{ S_{\mathscr{L}_\gamma} = (M_N, \n\cdot\n_{S_{\mathscr{L}_\gamma}}): \gamma \in \T\}.$$ Then for any $z \in \D$, we have the following isometric identification $$Id: S_{\mathscr{L}[z]} \rightarrow \{ S_{\mathscr{L}_\gamma}\}[z].$$
\end{prop}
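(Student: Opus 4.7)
The plan is to establish the two opposite norm inequalities that together give the claimed isometric identification. Throughout I write $\sigma(A) = (s_1(A), \dots, s_N(A))$ and recall that $\n A\n_{S_\mathscr{L}} = \n\sigma(A)\n_\mathscr{L}$ by definition of $S_\mathscr{L}$.

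For the easy inequality $\n A\n_{\{S_{\mathscr{L}_\gamma}\}[z]} \le \n A\n_{S_{\mathscr{L}[z]}}$, fix a singular value decomposition $A = U\Sigma V^*$ with $\Sigma = \operatorname{diag}(\sigma(A))$. Given any analytic $g:\D\to\C^N$ with $g(z) = \sigma(A)$, the $M_N$-valued function $f(w) = U\operatorname{diag}(g(w))V^*$ is analytic and satisfies $f(z) = A$; by symmetry of each $\mathscr{L}_\gamma$ one has $\n f(\gamma)\n_{S_{\mathscr{L}_\gamma}} = \n g(\gamma)\n_{\mathscr{L}_\gamma}$ a.e.\ on $\T$. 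Infimizing over $g$ yields the inequality.

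For the reverse inequality, let $f:\D\to M_N$ be analytic with $f(z) = A$ and $\underset{\gamma\in\T}{\esssup} \n f(\gamma)\n_{S_{\mathscr{L}_\gamma}} < \lambda$. The crucial ingredient is the log-subharmonicity of the partial products of singular values: by Ky Fan's identity, $\prod_{i=1}^j s_i(M)$ equals the operator norm of the $j$-th exterior power $\Lambda^j M$, and since $\Lambda^j f$ is analytic on $\D$, the function $w\mapsto \log\prod_{i=1}^j s_i(f(w))$ is subharmonic for every $j=1,\dots,N$. Applying the sub-mean value property at $z$ against harmonic measure yields the weak log-majorization
\[
\sum_{i=1}^{j}\log s_i(A) \le \sum_{i=1}^{j}\int\log s_i(f(\gamma))\,P_z(d\gamma), \qquad j=1,\dots,N.
\]
Setting $c_i = \exp\!\big(\int\log s_i(f(\gamma))\,P_z(d\gamma)\big)$, both $\sigma(A)$ and $c$ are nonincreasing and nonnegative, so this log-majorization implies the arithmetic weak majorization $\sigma(A)\prec_w c$. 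Hence by the Hardy--Littlewood--P\'olya inequality in the symmetric lattice $\mathscr{L}[z]$ (which inherits the symmetric lattice structure because permutations of the canonical basis act isometrically on every $\mathscr{L}_\gamma$), we obtain $\n\sigma(A)\n_{\mathscr{L}[z]} \le \n c\n_{\mathscr{L}[z]}$.

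To close the loop, I would estimate $\n c\n_{\mathscr{L}[z]}$ by invoking Proposition~\ref{proplattice} at the point $z$; its proof carries over verbatim once $dm$ is replaced by $P_z(d\gamma)$, which is legitimate by the remark following the definition of $E[z]$. Taking the coordinate-bounded test function $h(\gamma) = \sigma(f(\gamma))$, one has $\log c = \int \log h(\gamma)\,P_z(d\gamma)$ coordinatewise by construction, whence
\[
\log \n c\n_{\mathscr{L}[z]} \le \int \log\n h(\gamma)\n_{\mathscr{L}_\gamma}\,P_z(d\gamma) = \int \log\n f(\gamma)\n_{S_{\mathscr{L}_\gamma}}\,P_z(d\gamma) < \log\lambda.
\]
Chaining the estimates and infimizing over $f$ and $\lambda$ gives $\n A\n_{S_{\mathscr{L}[z]}} \le \n A\n_{\{S_{\mathscr{L}_\gamma}\}[z]}$. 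The main technical obstacle is precisely this reverse inequality: singular values are not analytic as functions of a matrix variable, so one cannot directly pass from an analytic $M_N$-valued function to an analytic $\C^N$-valued one carrying the relevant singular-value data. The detour through Ky Fan's log-subharmonicity together with the Hardy--Littlewood--P\'olya inequality in symmetric lattices is precisely what compensates for this failure and reduces the matrix estimate to the scalar interpolation formula of Proposition~\ref{proplattice}.
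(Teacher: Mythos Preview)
The paper does not actually prove this proposition: it declares the result classical, cites Pietsch, and writes ``we omit its proof.'' So there is nothing to compare your argument against line by line.

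That said, your argument is correct and is precisely the classical route one finds in the operator-ideal literature. The forward inequality via the singular value decomposition is immediate, and for the reverse you correctly identify the only real obstacle, namely that $\gamma\mapsto\sigma(f(\gamma))$ is not analytic. Your remedy---log-subharmonicity of $\prod_{i\le j}s_i(f(\cdot))$ via the exterior powers $\Lambda^j f$, passage from weak log-majorization to weak majorization (both $\sigma(A)$ and $c$ are nonincreasing, and $\exp$ is convex increasing), and then the Ky Fan/Hardy--Littlewood--P\'olya dominance in the symmetric lattice $\mathscr{L}[z]$---is exactly the standard mechanism. Two small points worth making explicit in a final write-up: (i) that $\mathscr{L}[z]$ is again a \emph{symmetric} Banach lattice follows because coordinate permutations act isometrically on every $\mathscr{L}_\gamma$ and hence on the interpolation space, which is what licenses the majorization inequality there; (ii) your appeal to Proposition~\ref{proplattice} at a general $z$ is legitimate either by replacing $dm$ with $P_z(d\gamma)$ throughout its proof or, equivalently, by precomposing with a M\"obius automorphism sending $z$ to $0$. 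With those two remarks added, the proof is complete.
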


Combining Propositions \ref{proplattice} and \ref{schatten-inter}, we have the following:
\begin{cor}
Consider the interpolation family $\{ S_{\mathscr{L}_\gamma}: \gamma \in \T\}.$ Let  $M: \T\rightarrow \T$ be measure preserving and let $\{ \widetilde{S}_{\mathscr{L}_\gamma}= S_{\mathscr{L}_{M(\gamma)}}: \gamma \in \T\}$, then $$Id: \{S_{\mathscr{L}_\gamma}\}[0] \rightarrow \{\widetilde{S}_{\mathscr{L}_\gamma}\}[0]$$ is isometric.

\end{cor}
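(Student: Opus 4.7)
The plan is to observe that this corollary is an immediate two-step chain consequence of the two propositions that immediately precede it in the same section, combined with the observation that the Schatten construction $\mathscr{L} \mapsto S_{\mathscr{L}}$ depends only on the underlying symmetric lattice norm.

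First I would apply Proposition \ref{schatten-inter} to the interpolation family $\{\mathscr{L}_\gamma: \gamma \in \T\}$ to obtain the isometric identification
$$
\{S_{\mathscr{L}_\gamma}\}[0] = S_{\mathscr{L}[0]}.
$$
Then I would apply the same proposition to the rearranged family $\{\widetilde{\mathscr{L}}_\gamma = \mathscr{L}_{M(\gamma)}: \gamma \in \T\}$, which is still a family of compatible symmetric Banach lattices (the canonical basis of $\C^N$ is simultaneously a lattice basis of every $\widetilde{\mathscr{L}}_\gamma$, and the condition \eqref{compatible-lattice} is preserved since $M$ is measure preserving, hence essential suprema and essential infima over $\gamma$ are unchanged). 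This yields
$$
\{\widetilde{S}_{\mathscr{L}_\gamma}\}[0] = \{S_{\widetilde{\mathscr{L}}_\gamma}\}[0] = S_{\widetilde{\mathscr{L}}[0]}.
$$

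Second, I would invoke Proposition \ref{proplattice} applied to $\{\mathscr{L}_\gamma\}$ with the measure-preserving map $M$, which says exactly that the identity map
$$
Id: \mathscr{L}[0] \rightarrow \widetilde{\mathscr{L}}[0]
$$
is isometric. Since the space $S_{\mathscr{L}}$ for a symmetric Banach lattice $\mathscr{L}$ is defined solely through the singular values by $\|A\|_{S_{\mathscr{L}}} = \|(s_1(A), \ldots, s_N(A))\|_{\mathscr{L}}$, two symmetric lattices that carry the same norm on $\C^N$ give rise to the same Schatten-type space on $M_N$. Therefore $S_{\mathscr{L}[0]} = S_{\widetilde{\mathscr{L}}[0]}$ isometrically, and chaining this with the two identifications from the first step gives the desired isometry $Id: \{S_{\mathscr{L}_\gamma}\}[0] \to \{\widetilde{S}_{\mathscr{L}_\gamma}\}[0]$.

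There is no real obstacle here; the only point that warrants a brief check is the observation that the rearranged family $\{\mathscr{L}_{M(\gamma)}\}$ satisfies the compatibility assumption \eqref{compatible-lattice} and remains symmetric, so that Proposition \ref{schatten-inter} genuinely applies to it. This is automatic from the fact that $M$ is measure preserving, so I would state it in one line and proceed.
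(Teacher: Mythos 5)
Your proof is correct and matches the paper's intent exactly: the corollary is stated there as an immediate consequence of combining Propositions \ref{proplattice} and \ref{schatten-inter}, which is precisely the two-step chain you spell out. The one point you flag (that the rearranged family remains a compatible symmetric family so \ref{schatten-inter} applies to it) is the right thing to note and is indeed automatic.
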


The following proposition is related to our problem, see the discussion after it.
\begin{prop}\label{dual}
Let $\{ E_\gamma: \gamma \in \T\}$ be an interpolation family of $N$-dimensional spaces such that there exist $c, C> 0$, for any $x \in \C^N$, $$c\cdot \min_k |x_k| \le \n x \n_\gamma \le C \cdot \max_k |x_k| \text{ for } a.e.\, \gamma \in \T. $$ Assume that  $Id: E_{\bar{\gamma}} \rightarrow \overline{E_\gamma}^*$ is isometric  for $a.e. \, \gamma \in \T$. Then $$E[\zeta] = \ell_2^N, \text{ for any } \zeta \in (-1, 1).$$ 
\end{prop}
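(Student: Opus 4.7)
The plan is to show that the unit ball $B_{E[\zeta]}\subset\C^{N}$ coincides with its own Hermitian polar in $\C^{N}$, which by a short convex--geometric argument forces $E[\zeta]=\ell_{2}^{N}$ isometrically. The hypothesis provides a pointwise ``self--duality'' identity on each fibre $E_{\gamma}$, and a reflection trick, available only for real $\zeta$, transfers this identity to the interpolated space.

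The first ingredient is the CCRSW duality theorem (the $\zeta\in\D$ version of \cite[Th.~2.12]{CCRSW} already invoked at $\zeta=0$ in the proof of Proposition \ref{startpoint}), which gives the isometric identification $E[\zeta]^{*}=\{E_{\gamma}^{*}:\gamma\in\T\}[\zeta]$, the dual of each fibre being taken under the bilinear pairing $(x,y)\mapsto\sum_{k}x_{k}y_{k}$ already fixed at the start of \S\ref{continuous-family}. The uniform two-sided bounds assumed on $\n\cdot\n_{\gamma}$ ensure admissibility of both $\{E_{\gamma}\}$ and its dual family. The second ingredient is the hypothesis in concrete form: with the convention $\n w\n_{\overline X}=\n\bar w\n_{X}$, a direct calculation gives $\n y\n_{\overline{E_{\gamma}}^{*}}=\n\bar y\n_{E_{\gamma}^{*}}$, so $E_{\bar\gamma}=\overline{E_{\gamma}}^{*}$ reads
\[
\n y\n_{E_{\bar\gamma}}=\n\bar y\n_{E_{\gamma}^{*}}\quad\text{for a.e. }\gamma\in\T,\ y\in\C^{N}.
\]

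The core step is a reflection trick. Given analytic $g:\D\to\C^{N}$, set $h(z):=\overline{g(\bar z)}$, which is again analytic and satisfies $h(\zeta)=\overline{g(\zeta)}$ for real $\zeta$. On $\T$ one has $g(\gamma)=\overline{h(\bar\gamma)}$, and the identity above gives $\n g(\gamma)\n_{E_{\gamma}^{*}}=\n h(\bar\gamma)\n_{E_{\bar\gamma}}$. Since for real $\zeta$ the harmonic measure $P_{\zeta}$ is invariant under $\gamma\mapsto\bar\gamma$, the change of variable $\gamma\mapsto\bar\gamma$ (applied to whichever of the $L_{\infty}$, $L_{p}$, or $\log$ formulas of \cite[Prop.~2.4]{CCRSW} is convenient) yields
\[
\n x\n_{\{E_{\gamma}^{*}\}[\zeta]}=\n\bar x\n_{E[\zeta]},\quad x\in\C^{N}.
\]
Combined with the first ingredient this reads $\n x\n_{E[\zeta]^{*}}=\n\bar x\n_{E[\zeta]}$, and unfolding the bilinear dual norm and substituting $x=\bar y$ gives the self-polarity identity
\[
\n y\n_{E[\zeta]}=\sup\Bigl\{\bigl|\langle x,y\rangle_{\ell_{2}^{N}}\bigr|:\n x\n_{E[\zeta]}\le 1\Bigr\},
\]
where $\langle x,y\rangle_{\ell_{2}^{N}}=\sum_{k}x_{k}\overline{y_{k}}$. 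In particular $B_{E[\zeta]}$ equals its own Hermitian polar in $\C^{N}$.

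The conclusion is then immediate: testing $x=y/\n y\n_{E[\zeta]}$ in the supremum gives $\n y\n_{E[\zeta]}\ge|y|_{2}$, so $B_{E[\zeta]}\subset B_{2}^{N}$, and Cauchy--Schwarz now forces the reverse inequality, whence $E[\zeta]=\ell_{2}^{N}$. I expect the main obstacle to be the bookkeeping in the reflection step: one has to keep separate complex conjugation of vectors, complex-conjugate spaces, bilinear vs.\ Hermitian dualities, and the antiholomorphic reflection $z\mapsto\bar z$ on $\D$. The restriction $\zeta\in(-1,1)$ is forced precisely here, because the invariance of $P_{\zeta}$ under $\gamma\mapsto\bar\gamma$ (equivalently $\bar\zeta=\zeta$) is what makes the reflection trick preserve the interpolation data.
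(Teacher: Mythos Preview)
Your argument is correct, but it takes a different route from the paper's. The paper proceeds more directly: given analytic $f$ with $f(\zeta)=x$, it observes that $z\mapsto\langle f(z),f(\bar z)\rangle_{\ell_2^N}$ is itself a bounded analytic function (because $z\mapsto\overline{f(\bar z)}$ is analytic), and since $\zeta=\bar\zeta$ this function equals $\n x\n_{\ell_2^N}^2$ at $z=\zeta$. Subharmonicity of $\log|\cdot|$ together with the pointwise estimate $|\langle f(\gamma),f(\bar\gamma)\rangle|\le\n f(\gamma)\n_{E_\gamma}\n f(\bar\gamma)\n_{E_{\bar\gamma}}$ (which is exactly the hypothesis) yields $\n x\n_{\ell_2^N}\le\n f\n_{H^\infty(\{E_\gamma\})}$, hence $\n x\n_{\ell_2^N}\le\n x\n_{E[\zeta]}$; the reverse inequality is then obtained by running the same argument on the dual family and invoking CCRSW duality.

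Both proofs hinge on the same reflection $z\mapsto\bar z$, but the paper packages the reflection and the duality pairing into a single analytic scalar function and reads off one inequality immediately, whereas you separate the two ingredients: first you push the reflection through the CCRSW duality theorem to establish the structural identity $\n x\n_{E[\zeta]^*}=\n\bar x\n_{E[\zeta]}$ (Hermitian self-polarity of $B_{E[\zeta]}$), and only then deduce $E[\zeta]=\ell_2^N$ by the elementary ``self-polar $\Rightarrow$ Euclidean'' argument. Your route is slightly longer and leans on the CCRSW duality theorem from the outset, but it has the merit of isolating cleanly the underlying reason for the result: the interpolated norm is its own Hermitian dual.
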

\begin{proof}
Fix $\zeta \in (-1, 1)$. For any $x \in \C^N$. Given any analytic function $f: \D \rightarrow \C^N$ such that $f(\zeta) =x$ and $\n f\n_{H^\infty(\{E_\gamma\})}< \infty$. Since $\zeta = \bar{\zeta}$, we have $f(\zeta) = f(\bar{\zeta}) = x$. The assumption on the interpolation family implies that the function $ z \mapsto \langle f(z), f(\bar{z}) \rangle $ is bounded analytic, hence \begin{eqnarray*} \log \n x\n_{\ell_2^N}^2 &= &\log|\langle f(\zeta), f(\bar{\zeta}) \rangle |\le \int \log | \langle f(\gamma), f(\bar{\gamma})  \rangle | P_\zeta(d\gamma)\\ &  \le & \int \log \Big(\n f(\gamma) \n_{E_\gamma} \n f(\bar{\gamma}) \n_{\overline{E_\gamma^*}}\Big)P_\zeta(d\gamma) \\ & = &  \int \log \Big( \n f(\gamma) \n_{E_\gamma} \n f(\bar{\gamma}) \n_{E_{\bar{\gamma}}}\Big)P_\zeta(d\gamma) \\ & \le &  \log \Big( \n f \n_{H^\infty(\{E_\gamma\})}^2\Big). \end{eqnarray*} Hence $\n x\n_{\ell_2^N} \le \n f \n_{H^\infty(\{E_\gamma\})}.$ It follows that $$ \n x\n_{\ell_2^N} \le \n x\n_{E[\zeta]}.$$ By duality, this inequality also holds in the dual case, hence we must have $\n x\n_{\ell_2^N} = \n x\n_{E[\zeta]}.$
\end{proof}

Let $Q_j$ be the open arc of $\T$ in the $j$-th quadrant, i.e., $$Q_j = \Big\{ e^{i\theta}: \frac{(k-1)\pi}{2}< \theta<  \frac{k \pi}{2}\Big\} \text{ for }  1 \le j \le 4.$$ Suppose that $X$ and $Y$ are $N$-dimensional, define two  interpolation families $\{Z_\gamma: \gamma \in \T\}$ and $\{ \widetilde{Z}_\gamma: \gamma \in \T\}$  by letting $$Z_\gamma =\left\{ \begin{array}{cl} X, & \gamma \in Q_1 \\ Y, & \gamma \in Q_2 \\ \overline{Y^*}, & \gamma \in Q_3 \\ \overline{X^*}, & \gamma \in Q_4\end{array},\right. \quad \widetilde{Z}_\gamma =\left\{ \begin{array}{cl} X, & \gamma \in Q_1 \\ Y, & \gamma \in Q_2 \\ \overline{X^*}, & \gamma \in Q_3 \\ \overline{Y^*}, & \gamma \in Q_4\end{array}.\right. $$ By Proposition \ref{dual}, $Z[0] = \ell_2^N$. For suitable choices of $X$ and $Y$, we could have $\widetilde{Z}[0] \ne \ell_2^N$. More precisely, we have the following proposition.

\begin{prop}
For any $\alpha \in \T$, define a $2 \times 2$ selfadjoint matrix $$\delta_{\alpha}: = \left[\begin{array}{cc}0 & \overline{\alpha}\\ \alpha & 0 \end{array}\right]. $$ For $0< \varepsilon < 1$, let $w^{\alpha,\varepsilon} = ( I + \varepsilon \delta_\alpha)^{1/2}$ and $X = \ell^2_{w^{\alpha,\varepsilon } }$. Consider the weight $W^{\alpha, \varepsilon}$ and the interpolation family generated by it as follows: $$W^{\alpha, \varepsilon}(\gamma) =\left\{ \begin{array}{cl} I + \varepsilon \delta_\alpha, & \gamma \in Q_1 \\ (I + \varepsilon \overline{ \delta_\alpha})^{-1}, & \gamma \in Q_2 \\ (I + \varepsilon \delta_\alpha)^{-1}, & \gamma \in Q_3 \\ I   + \varepsilon \overline{\delta_\alpha}, & \gamma \in Q_4\end{array};\right. \quad \widetilde{Z^{\alpha, \varepsilon}_\gamma} =\left\{ \begin{array}{cl} X, & \gamma \in Q_1 \\ X^*, & \gamma \in Q_2 \\ \overline{ X}^*, & \gamma \in Q_3 \\ \overline{ X }, & \gamma \in Q_4\end{array}.\right.  $$ 
There exists $\alpha \in \T$ and $0 < \varepsilon_0< 1$, such that if $ 0 < \varepsilon < \varepsilon_0$ then  $$\widetilde{Z^{\alpha, \varepsilon}}[0] \ne \ell_2^N.$$ 
\end{prop}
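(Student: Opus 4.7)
The plan is to apply Proposition \ref{startpoint} to identify $\widetilde{Z^{\alpha,\varepsilon}}[0] = \ell^2_{F_{W^{\alpha,\varepsilon}}(0)}$, reducing the proposition to showing $|F_{W^{\alpha,\varepsilon}}(0)|^2 \ne I$ for a suitable $\alpha \in \T$ and all sufficiently small $\varepsilon > 0$. Proposition \ref{degree2} cannot be applied directly, since $W^{\alpha,\varepsilon}$ is not of the simple form $I + \varepsilon \Delta$ — on $Q_2$ and $Q_3$ it contains geometric series $(I+\varepsilon \bar\delta_\alpha)^{-1}$ and $(I+\varepsilon\delta_\alpha)^{-1}$ in $\varepsilon$. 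So the first step is to extend the approximation formula to a weight of the form $W = I + \varepsilon \Delta + \varepsilon^2 \Delta^{(2)} + O(\varepsilon^3)$.

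Re-running the Toeplitz-inversion argument of \S\ref{approximation-formula} on $P_+(W\Psi) = P_+(W)$ order by order yields $\Psi = \varepsilon P_+(\Delta) + \varepsilon^2\bigl(P_+(\Delta^{(2)}) - T_\Delta(P_+(\Delta))\bigr) + O(\varepsilon^3)$. Substituting into $|F_W(0)|^2 = (I - \Psi)^* W (I - \Psi)$ and integrating over $\T$ (the left-hand side is constant) gives
$$|F_W(0)|^2 = I + \varepsilon\,\widehat\Delta(0) + \varepsilon^2 \Big[\widehat{\Delta^{(2)}}(0) - \sum_{n\ge 1}|\widehat\Delta(n)|^2\Big] + O(\varepsilon^3).$$
For $W = W^{\alpha,\varepsilon}$ one reads off
$$\Delta = \delta_\alpha 1_{Q_1} - \bar\delta_\alpha 1_{Q_2} - \delta_\alpha 1_{Q_3} + \bar\delta_\alpha 1_{Q_4},\qquad \Delta^{(2)} = \bar\delta_\alpha^2 1_{Q_2} + \delta_\alpha^2 1_{Q_3},$$
and a direct integration gives $\widehat\Delta(0) = 0$, so the obstruction appears at order $\varepsilon^2$.

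The core computation is to identify the $\varepsilon^2$ coefficient. Since $|\alpha|=1$, one has $\delta_\alpha^2 = \bar\delta_\alpha^2 = I$, hence $\widehat{\Delta^{(2)}}(0) = \tfrac{1}{2}I$. For the Fourier sum, the rotational symmetry $1_{Q_j}(\gamma) = 1_{Q_1}(i^{-(j-1)}\gamma)$ kills $\widehat\Delta(n)$ for even $n$ and, for odd $n$, couples $\widehat{1_{Q_1}}(n)$ to $\delta_\alpha + i\bar\delta_\alpha$ or $\delta_\alpha - i\bar\delta_\alpha$ according as $n \equiv 1$ or $3 \pmod 4$. A short matrix computation yields
$$(\delta_\alpha \pm i\bar\delta_\alpha)^*(\delta_\alpha \pm i\bar\delta_\alpha) = \mathrm{diag}\bigl(2\pm 2\operatorname{Im}(\alpha^2),\,2\mp 2\operatorname{Im}(\alpha^2)\bigr),$$
and summing against $\sum_{n\text{ odd}}4|\widehat{1_{Q_1}}(n)|^2 = \tfrac{1}{2}$ causes the scalar parts to cancel $\tfrac{1}{2}I$ exactly, leaving the $\varepsilon^2$ coefficient equal to $c\,\operatorname{Im}(\alpha^2)\,\mathrm{diag}(1,-1)$ with $c$ a nonzero multiple of Catalan's constant $\sum_{k\ge 0}(-1)^k(2k+1)^{-2}$.

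Hence choosing $\alpha = e^{i\pi/4}$ (so $\operatorname{Im}(\alpha^2) = 1$), for all sufficiently small $\varepsilon > 0$ the $\varepsilon^2$ coefficient is a nonzero diagonal matrix with distinct eigenvalues, so $|F_{W^{\alpha,\varepsilon}}(0)|^2 \ne I$ and $\widetilde{Z^{\alpha,\varepsilon}}[0] \ne \ell_2^2$. The main delicacy is not the second-order bookkeeping but the fortuitous exact cancellation of $\tfrac{1}{2}I$ against the scalar part of $\sum|\widehat\Delta(n)|^2$: without it one would already obtain a (proportional) rescaling of $\ell_2^2$, but it is the surviving traceless diagonal part — sensitive to the phase $\alpha$ through $\operatorname{Im}(\alpha^2)$ — that detects $\widetilde{Z^{\alpha,\varepsilon}}[0] \ne \ell_2^2$ and explains why the example requires four distinct spaces rather than two.
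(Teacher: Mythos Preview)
Your argument is correct and follows essentially the same route as the paper: both extend the approximation formula of Proposition~\ref{degree2} to weights of the form $I+\varepsilon\Delta+\varepsilon^2\Delta^{(2)}+O(\varepsilon^3)$, identify $\widehat{\Delta^{(2)}}(0)=\tfrac12 I$ and $\sum_{n\ge1}|\widehat\Delta(n)|^2=\operatorname{diag}(\|P_+(h_\alpha)\|_2^2,\|P_+(\overline{h_\alpha})\|_2^2)$ with $h_\alpha=\alpha 1_{Q_1}-\bar\alpha 1_{Q_2}-\alpha 1_{Q_3}+\bar\alpha 1_{Q_4}$, and then show this diagonal matrix is not $\tfrac12 I$ for some $\alpha$. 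The only difference is at the final step: the paper argues by contradiction via the cross-terms $\langle P_+(1_{Q_i}),P_+(1_{Q_j})\rangle$ and arrives at the series $\sum_{k\ge0}\tfrac{2k+1}{(4k+1)^2(4k+3)^2}$, whereas you compute $\widehat{h_\alpha}(n)$ directly from the rotational symmetry $1_{Q_j}(\gamma)=1_{Q_1}(i^{-(j-1)}\gamma)$ and obtain Catalan's constant (these series differ by a factor $8$); your version has the advantage of naming an explicit $\alpha=e^{i\pi/4}$. One minor slip: $\sum_{n\ \mathrm{odd},\,n\ge1}4|\widehat{1_{Q_1}}(n)|^2=\tfrac14$, not $\tfrac12$; since the scalar part of $(\delta_\alpha\pm i\bar\delta_\alpha)^*(\delta_\alpha\pm i\bar\delta_\alpha)$ is $2I$, the product is still $\tfrac12 I$ and the cancellation goes through exactly as you claim.
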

\begin{proof}
We have $$W^{\alpha, \varepsilon}(\gamma) =\left\{ \begin{array}{cl} I + \varepsilon \delta_\alpha, & \gamma \in Q_1 \\ I - \varepsilon \overline{ \delta_\alpha} + \varepsilon^2 I + \mathcal{O}(\varepsilon^3), & \gamma \in Q_2 \\ I -\varepsilon \delta_\alpha + \varepsilon^2 I + \mathcal{O}(\varepsilon^3), & \gamma \in Q_3 \\ I   + \varepsilon \overline{\delta_\alpha}, & \gamma \in Q_4\end{array}.\right.$$ Applying a slightly modified variant of the approximation equation \eqref{approximation}, we have \begin{eqnarray*}| F_{W^{\alpha, \varepsilon}} (0) |^2  & = & I + \frac{\varepsilon^2 I }{2} - \varepsilon^2 \left[\begin{array}{cc} \n P_{+}(h_\alpha)\n_2^2 & 0 \\ 0 &\n P_{+}(\overline{h_\alpha})\n_2^2 \end{array}\right]  + \mathcal{O}(\varepsilon^3);\end{eqnarray*} where $h_\alpha = \alpha 1_{Q_1}- \overline{\alpha}1_{Q_2} - \alpha 1_{Q_3} + \overline{\alpha} 1_{Q_4}.$ 

Assume by contradiction that $\widetilde{Z^{\alpha, \varepsilon}}[0] = \ell_2^N$ for any $\alpha\in \T$ and small $\varepsilon$. Then we must have $\n P_{+}(h_{\alpha})\n_2^2  = \frac{1}{2}$ for any $\alpha \in \T$. In particular, $$\alpha \mapsto \n P_{+}(h_\alpha)\n_2^2 \text{ is a constant function on $\T$}.$$ It follows that the following function is a constant function:  \begin{eqnarray*}\label{constantfunction} C(\alpha) &= &  \Re \langle \alpha P_{+}(1_{Q_1}), - \overline{\alpha} P_{+}(1_{Q_2}) \rangle + \Re \langle \alpha P_{+}(1_{Q_1}), \overline{\alpha} P_{+}(1_{Q_4}) \rangle  \nonumber \\ & +&  \Re \langle -\overline{\alpha} P_{+}(1_{Q_2}), - \alpha P_{+}(1_{Q_3}) \rangle + \Re \langle - \alpha P_{+}(1_{Q_3}),  \overline{\alpha} P_{+}(1_{Q_4}) \rangle .   \end{eqnarray*} Clearly, by translation invariance of Haar measure, we have $$\langle P_{+}(1_{Q_1}), P_{+}(1_{Q_2}) \rangle =  \langle P_{+}(1_{Q_2}), P_{+}(1_{Q_3}) \rangle  = \langle P_{+}(1_{Q_3}), P_{+}(1_{Q_4}) \rangle,$$   $$ \langle P_{+}(1_{Q_1}), P_{+}(1_{Q_4}) = \langle P_{+}(1_{Q_2}), P_{+}(1_{Q_1}) \rangle,$$   hence $$ C(\alpha) = - \Re\Big\{2\alpha^2 \Big( \langle P_{+}(1_{Q_1}), P_{+}(1_{Q_2}) \rangle  - \overline{\langle P_{+}(1_{Q_1}), P_{+}(1_{Q_2}) \rangle }\Big)\Big\}. $$ Then $\alpha \mapsto C(\alpha)$ is constant function if and only if $$ \langle P_{+}(1_{Q_1}), P_{+}(1_{Q_2}) \rangle  - \overline{\langle P_{+}(1_{Q_1}), P_{+}(1_{Q_2})\rangle } = 0,$$ which is equivalent to \begin{eqnarray}\label{0-imaginary} \Im \Big(\langle P_{+}(1_{Q_1}), P_{+}(1_{Q_2}) \rangle\Big) = 0.\end{eqnarray} By a similar computation as in the proof of inequality \eqref{three-arc}, we have $$ \Im \Big(\langle P_{+}(1_{Q_1}), P_{+}(1_{Q_2}) \rangle\Big) = \frac{4}{\pi^2} \sum_{k = 0}^\infty \frac{2k+1}{(4k+1)^2(4k+3)^2},$$ this contradicts \eqref{0-imaginary}, and hence completes the proof.
\end{proof}

\section{Appendix}\label{appendix}
Here we reformulate the argument of \cite{CCRSW} to emphasize the crucial role played by a certain inner function associated to the measurable partition of the unit circle in proving Theorem \ref{known}. It follows from the preceding that the analogous inner function for a measurable partition into 3 subsets does not exist.


\begin{lem}\label{inner}
Suppose that $\Gamma_0 \cup \Gamma_1$ is a measurable partition of $\T$. Then there exists an inner function $\varphi$ such that $\varphi(0) = 0$. And $\varphi(\Gamma_0) \cup \varphi(\Gamma_1)$ is a partition of $\T$ into two disjoint arcs (up to negligible sets). Moreover, \begin{eqnarray}\label{m.p.} m(\varphi(\Gamma_0)) = m(\Gamma_0) \textit{ and } m(\varphi(\Gamma_1)) = m(\Gamma_1).\end{eqnarray}
\end{lem}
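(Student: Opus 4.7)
The plan is to construct $\varphi$ as a composition $H^{-1}\circ g$, where $H$ is the standard conformal map of $\D$ onto a vertical strip and $g$ encodes the partition $\Gamma_0\cup\Gamma_1$. Set $\theta=m(\Gamma_1)$ and fix any arc $I\subset\T$ of normalized length $\theta$. Let $u_h$ be the harmonic extension of $1_I$ to $\D$, $\tilde u_h$ its harmonic conjugate normalized by $\tilde u_h(0)=0$, and put $H=u_h+i\tilde u_h$. Then $H$ is holomorphic on $\D$ with $H(0)=\theta$, and the boundary data of $\Re H$ (equal to $1$ on $I$ and $0$ on $\T\setminus I$), together with the logarithmic blow-up of $\tilde u_h$ at the two endpoints of $I$, identify $H$ as a biholomorphism from $\D$ onto the open strip $S=\{w\in\C:0<\Re w<1\}$, sending $I$ to $\{\Re w=1\}$ and $\T\setminus I$ to $\{\Re w=0\}$.

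Performing the same construction with $\Gamma_1$ in place of $I$, let $u$ be the harmonic extension of $1_{\Gamma_1}$, $\tilde u$ its harmonic conjugate with $\tilde u(0)=0$, and set $g=u+i\tilde u$. Then $g$ is holomorphic on $\D$, $g(0)=\theta$, and by the strict maximum principle $g(\D)\subset S$. Define $\varphi=H^{-1}\circ g:\D\to\D$. One then checks in order: (i) $\varphi(0)=H^{-1}(g(0))=H^{-1}(\theta)=0$; (ii) for a.e.\ $\gamma\in\T$, $\Re g(\gamma)\in\{0,1\}$, so $g(\gamma)\in\partial S$ and $\varphi(\gamma)=H^{-1}(g(\gamma))\in\T$, which proves that $\varphi$ is inner; (iii) on $\Gamma_1$ (respectively $\Gamma_0$), $\Re g\equiv 1$ (respectively $0$), so $g$ lands on the right (respectively left) vertical edge of $S$, whence $\varphi(\Gamma_1)\subset I$ and $\varphi(\Gamma_0)\subset \T\setminus I$. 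In particular, $\varphi(\Gamma_0)\cup\varphi(\Gamma_1)$ partitions $\T$ into the two arcs $\T\setminus I$ and $I$ up to null sets.

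For the measure-preservation claim \eqref{m.p.}, the key fact is that any inner function $\varphi$ with $\varphi(0)=0$ preserves normalized Lebesgue measure on the boundary, since $\int_\T(f\circ\varphi)\,dm=f^{\mathrm{harm}}(\varphi(0))=f^{\mathrm{harm}}(0)=\int_\T f\,dm$ for bounded measurable $f$. Applying this to $f=1_I$ gives $m(\varphi^{-1}(I))=m(I)=\theta$, and combined with (iii) this forces $\varphi^{-1}(I)=\Gamma_1$ modulo null sets; hence $\varphi(\Gamma_1)$ has full measure in $I$, so $m(\varphi(\Gamma_1))=m(I)=\theta=m(\Gamma_1)$, and symmetrically $m(\varphi(\Gamma_0))=m(\Gamma_0)$. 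The most delicate point in the plan is the very first step, namely verifying rigorously that $H$ is a biholomorphism onto the entire strip $S$ rather than merely a holomorphic map into $S$: one needs a careful local analysis near the two endpoints of the arc $I$, where $\tilde u_h$ develops logarithmic singularities that make $H$ proper onto $S$. Once this model map is in hand, the rest of the proof reduces to the clean formal composition described above.
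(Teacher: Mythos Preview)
Your proposal is correct and follows essentially the same route as the paper: both build $\varphi$ as the composition of the analytic completion $g=u+i\tilde u$ of the harmonic extension of $1_{\Gamma_1}$ (mapping $\D$ into the strip $S=\{0<\Re w<1\}$) with a conformal map from $S$ back to $\D$ sending $g(0)=\theta$ to $0$. The only difference is cosmetic: the paper obtains that conformal map by invoking the Riemann mapping theorem for $S$, whereas you realize it explicitly as $H^{-1}$, with $H$ produced by the very same harmonic--conjugate construction applied to an arc $I$ of length $\theta$. Your choice has the pleasant side effect of naming the target arcs in advance as $I$ and $\T\setminus I$, at the cost of the extra verification (which you flag) that $H$ is actually a biholomorphism onto the full strip; the paper trades that verification for the black box of Riemann mapping. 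Your measure argument via $m(\varphi^{-1}(I))=m(I)$ is in fact slightly cleaner than the paper's, which appeals to the identity $m(\varphi(\Gamma_0))+m(\varphi(\Gamma_1))=1$ somewhat informally.
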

\begin{proof}
Since any origin-preserving inner function $\varphi$ preserves the measure $m$ on $\T$ (indeed note $\int_\T \varphi(\gamma)^n dm(\gamma) = \int_\T \gamma^n dm(\gamma)\, \forall n \in \Z$), it suffices to show the existence of an inner function satisfying the partition condition. 

Let $v = 1_{\Gamma_1}: \T \rightarrow \R$ be the characteristic function of $\Gamma_1$, its harmonic extension on $\D$ will also be denoted by $v$. Note that $0 < v(z) < 1$ for any $z \in\D$. Let $\tilde{v}$ be the harmonic conjugate of $v$ and define $\psi = v + i \tilde{v}$ on $\D$. Then $\psi$ is an analytic map from $\D$ to $\mathcal{S}: = \{ z \in \C: 0 < \Re (z) < 1 \}$ and has non-tangential limit $\psi(\gamma) = v(\gamma) + i \tilde{v}(\gamma)$, a.e. $\gamma \in \T$. Thus $$\psi(\Gamma_0) \subset \partial_0 \text{ and } \psi(\Gamma_1) \subset \partial_1,$$ where $\partial_0= \{ z \in \C : \Re(z)  = 0\}$ and $\partial_1 = \{ z \in \C: \Re(z) = 1\}$. Let $\tau: \mathcal{S} \rightarrow \D$ be a Riemann conformal mapping such that $\tau(\psi(0))= 0$.   Note that $\tau(\partial_0)$ and  $\tau(\partial_1)$ are disjoint open arcs of $\T$. Define $\varphi= \tau\circ \psi: \D \rightarrow \D.$ Then $\varphi$ is an inner function such that $\varphi(0) = 0$. We have $$ \varphi(\Gamma_0) \subset \tau(\partial_0) \text{ and } \varphi(\Gamma_1) \subset \tau(\partial_1).$$ Hence $m(\varphi(\Gamma_0)) \le m(\tau(\partial_0))$ and $ m(\varphi(\Gamma_1)) \le m(\tau(\partial_1))$. Since $\varphi$ preserves the measure $m$, we have $$1 = m(\varphi(\Gamma_0)) + m(\varphi(\Gamma_1)) \le m(\tau(\partial_0)) + m(\tau(\partial_1)) = 1.$$ Thus up to negligible sets, we have $$\varphi(\Gamma_0) = \tau(\partial_0) \text{ and } \varphi(\Gamma_1) = \tau(\partial_1).$$
\end{proof}

\begin{proof}[Proof of Theorem \ref{known}]
Suppose $\Gamma_0 \cup \Gamma_1$ is a measurable partition of the circle and let the interpolation family $\{ X_\gamma: \gamma \in \T\}$ be such that $$X_\gamma = Z_0 \text{ for all } \gamma \in \Gamma_0,\,\, X_\gamma = Z_1\text{ for all } \gamma \in \Gamma_1.$$
By Lemma \ref{inner}, we can find an inner function $\varphi$ such that $\varphi(0) = 0$ and $\varphi(\Gamma_0) = J_0, \, \,\varphi(\Gamma_1) = J_1$ up to negligible sets, where $J_0 \cup J_1$ is a partition of the circle into disjoint arcs. Consider the interpolation family of spaces $\{ \widetilde{X}_\gamma: \gamma\in \T\}$ such that $$\widetilde{X}_\gamma = Z_0 \text{ for all } \gamma \in J_0,\,\, \widetilde{X}_\gamma = Z_1\text{ for all } \gamma \in J_1.$$ Then by a conformal mapping, it is easy to see \begin{eqnarray}\label{inter-pair}\widetilde{X}[0] = (Z_0, Z_1)_\theta, \, \, \theta = m(J_1) = m(\Gamma_1).\end{eqnarray} We have $\widetilde{X}_{\varphi(\gamma)} = X_\gamma$ for a.e. $\gamma \in \T$. If $x \in \C^N$ is such that $\n x \n_{\widetilde{X}[0]} < 1$, then by definition, there exists an analytic function $f: \T\rightarrow \C^N$ such that $f(0) = x$ and  $\underset{t \in \T}{\esssup} \n f(\gamma)\n_{\widetilde{X}_\gamma}<1$. Thus $$\underset{\gamma \in \T}{\esssup}\n (f\circ \varphi)(\gamma)\n_{X_{t}} = \underset{\gamma \in \T}{\esssup}\n (f\circ \varphi)(\gamma)\n_{\widetilde{X}_{\varphi(\gamma)}}   = \underset{\gamma \in \T}{\esssup} \n f(\gamma)\n_{\widetilde{X}_\gamma}<1.$$ Since $(f \circ \varphi)(0) = f(0) = x$, the above inequality shows that $\n x\n_{X[0]}< 1.$ By homogeneity, $\n x \n_{X[0]} \le \n x \n_{\widetilde{X}[0]} .$ But if we consider the dual of the above interpolation family, then we get the same inequality, hence we must have \begin{eqnarray}\label{origin-pair}\n x \n_{X[0]} = \n x\n_{\widetilde{X}[0]}.\end{eqnarray} By \eqref{origin-pair} and \eqref{inter-pair}, we have $$X[0] = (Z_0, Z_1)_\theta, \, \, \theta = m(\Gamma_1).$$
\end{proof}

By definition, a space is arcwise $\theta$-Hilbertian if it can be obtained by complex interpolation of a family of spaces on the circle such that on an arc, the spaces are Hilbertian.

 \begin{rem}[Communicated by Gilles Pisier] The preceding argument also shows that, as conjectured in \cite{Pisier8}, of which we use the terminology, any   $\theta$-Hilbertian  Banach space is automatically arcwise $\theta$-Hilbertian, at least under suitable assumptions on the dual spaces, that are automatic in the finite dimensional case. We merely indicate the argument in the latter case. Consider a  measurable partition $\Gamma_0\cup \Gamma_1$ of the unit circle
 with $m(\Gamma_1)=\theta$ and a family of $n$-dimensional spaces $\{E_\gamma\mid \gamma\in \partial D\}$ such that
 $E_\gamma=\ell_2^n$ for any $\gamma\in \Gamma_1$ but    $E_\gamma$  is  arbitrary for $\gamma \in \Gamma_0$. If $\varphi$ is the inner function appearing in Lemma \ref{inner},
 and if we set $F_\gamma=E_{\varphi(\gamma)}$ then   the identity map
 $Id: E[0]\to F[0]$ is clearly contractive and $F[0]$ is arcwise $\theta$-Hilbertian.
 Applying this to  the dual family $\{E_\gamma^* \}$ in place of 
 $\{E_\gamma\}$  and using the duality theorem from \cite[Th. 2.12 ]{CCRSW}) we find that  $Id:\ {E[0]}^*\to {F[0]}^*$ is also contractive, and hence
 is isometric. This shows that $E[0]$ is arcwise $\theta$-Hilbertian.
\end{rem}

\section*{Acknowledgements}
The author is grateful to  Gilles Pisier for stimulating discussions and valuable suggestions, he would like to thank the referee for careful reading of the manuscript. The author was partially supported by the ANR grant 2011-BS01-00801 and the A*MIDEX grant.

\newcommand{\etalchar}[1]{$^{#1}$}
\def\cprime{$'$}
\providecommand{\bysame}{\leavevmode\hbox to3em{\hrulefill}\thinspace}
\providecommand{\MR}{\relax\ifhmode\unskip\space\fi MR }
\providecommand{\MRhref}[2]{%
  \href{http://www.ams.org/mathscinet-getitem?mr=#1}{#2}
}
\providecommand{\href}[2]{#2}

\flushright{Yanqi QIU}
\flushright{Institut Mathématique de Jussieu, \'Equipe d'Analyse Fonctionnelle}
\flushright{Universit\'e Paris VI}
\flushright {Place Jussieu, 75252 Paris Cedex 05, France}
\flushright{yqi.qiu@gmail.com}

\end{document}